%% file: main.tex
\documentclass[11pt]{amsart}
\usepackage{silence}
\usepackage{mathtree}
\author{Chiara Ascenzi}
\address{Tampere University, Tampere, Finland}
\email{chiara.ascenzi@tuni.fi}
\title{Derived equivalences between diagram categories of finite posets}

\begin{document}

\restorepagecolor




\begin{abstract}
    We study universal derived equivalences between diagram categories indexed by finite posets. Starting from a construction of Ladkani, we give an intrinsic criterion for determining when a finite poset admits a decomposition to which this construction can be applied. This leads to the notion of an admissible cut, formulated entirely in terms of the order structure of the poset. Our main result therefore provides a method for producing, from a given finite poset admitting such a cut, a new poset that is universally derived equivalent to it. The construction is reversible once the partition is retained, so that the original mixed order relations can be recovered from the transformed poset. As applications, we show that every finite poset of height at most one is universally derived equivalent to its opposite, give a criterion characterizing source-to-sink transformations at minimal elements, and recover the universal derived equivalence of all orientations of a finite tree through sequences of such local transformations. These results are also applied to persistence modules indexed by finite posets and to extensions obtained by adjoining further finite parameters.
    \end{abstract}
    \subjclass[2020]{06A11, 16E35, 18G80, 55N31}
    \keywords{Diagram categories; derived equivalences; universal derived equivalences; incidence algebras; persistence modules; topological data analysis; representations of finite posets.}
    \maketitle
\input{introduzione}

\input{Raccolta_teoremi_UDE}
\input{Teorema_principale_e_conseguenze}
\input{orientation_of_tree}
\input{TDA}

\input{bibliography}
\end{document}

%% file: introduzione.tex
\section{Introduction}
Derived categories play a central role in homological algebra and representation theory, where they provide a framework for comparing algebraic objects beyond their abelian or module categories. In the representation theory of quivers and posets, one is naturally led to derived categories of diagram categories. If $P$ is a finite poset and $\mathcal{A}$ is an abelian category, the category $\mathcal{A}^P$ of $P$-shaped diagrams in $\mathcal{A}$ is again abelian and one may study its derived category $D(\mathcal{A}^P)$. 

For a fixed coefficient category -- for instance, finite-dimensional vector spaces over a field $k$ -- one may ask when two posets give rise to equivalent derived categories of representations. Such an equivalence is, a priori, tied to the chosen coefficients. A stronger phenomenon occurs when the equivalence is completely independent of this choice: two finite posets $P$ and $Q$ are called universally derived equivalent if
\[D(\mathcal{A}^P) \simeq D(\mathcal{A}^Q)\] for every abelian category $\mathcal{A}$. 
Universal derived equivalence is therefore a genuinely combinatorial property of the indexing posets. The problem is then to understand which transformations of a finite poset preserve its derived category in this universal sense. 

One of the main constructions of universally derived equivalent posets is due to Ladkani \cite{ladkani}. Starting from two finite posets together with suitable combinatorial data relating them, his construction produces two different partial orders on their disjoint union and proves that the corresponding diagram categories are universally derived equivalent (see Section \ref{Ladkani's construction}).

In this article, we address the corresponding recognition problem. Starting with a finite poset $P$, we determine exactly which partitions $P=A \sqcup B$ realize $P$ as the positive poset in Ladkani's construction. Our main result, Theorem \ref{generalizzazione teomio}, gives necessary and sufficient conditions for such a realization, formulated entirely in terms of the order structure of $P$. These conditions define an admissible cut (Definition \ref{def admissible cuts}): $A$ is an ideal, $B$ is a filter, and the sets $M(a)$ of minimal elements of $B$ lying above $a$ satisfy separation and compatibility conditions.

Whenever such a realization exists, the subsets required by Ladkani's construction are uniquely determined by $P$ and by the chosen partition. Moreover, such subsets coincide with the sets $M(a)$. Thus, for each partition, the construction data can be recovered from the given order and their admissibility checked directly. Testing all partitions gives a finite procedure for finding all admissible cuts of $P$ (see the discussion following Theorem \ref{generalizzazione teomio}). Each admissible cut then determines an associated poset $P^-$, obtained by changing the relations between $A$ and $B$, called mixed relations. This associated poset is universally derived equivalent to $P$ (Corollary \ref{The associated universally derived equivalent poset}). For example, applying this procedure to the five-element Tamari lattice $T_3$ produces a universally derived equivalent poset whose Hasse diagram is an orientation of the Dynkin diagram $D_5$ (Example \ref{T3/D5}). In particular, the underlying undirected Hasse graph changes from a cycle to a tree.

The transformation is reversible once the partition $A \sqcup B$ is retained: the sets $M(a)$, and hence all mixed relations of $P$, can be recovered from $P^-$ (Proposition \ref{Reversibility of the construction}). This reconstruction is dual to the original construction. We also show that every finite poset of height at most one is universally derived equivalent to its opposite (Corollary \ref{Two-level posets and opposites}).

In the singleton case, where each $M(a)$ consists of a single element, the construction is determined by an order-preserving map $A \to B$ (Corollary \ref{Reconstructing a poset from a cut}). The non-singleton case is essential, however, and appears naturally in the source-to-sink operations studied in Section \ref{An intrinsic criterion for source-clicks}. We also establish an invariance property under isomorphisms (Proposition \ref{invgen}): transporting the combinatorial data along isomorphisms produces isomorphic posets, reducing redundancy in classification problems.

We next consider local transformations of Hasse diagrams. For a minimal element $s$, we reverse all arrows incident with $s$ in the Hasse diagram and take the transitive closure, obtaining a new poset in which $s$ is maximal. This operation is called a source-click at $s$ (see Section \ref{An intrinsic criterion for source-clicks}). We give a criterion, valid for arbitrary finite posets, for this source-click to arise from an admissible cut (Proposition \ref{General source-click criterion}). For orientations of trees the criterion is automatically satisfied (Corollary \ref{Source step}). We then give a short combinatorial proof of the known fact that any orientation of a finite tree can be reached from any other by a finite sequence of source-clicks (Proposition \ref{Transitivity of clicks on a tree}). In this way, the general construction recovers the universal derived equivalence of all orientations of a tree, while also explaining it through explicit local transformations (Theorem \ref{tree-exhaustiveness}).

Finally, we relate these constructions to persistence modules, which are a central objects in topological data analysis (TDA). Since persistence modules indexed by a finite poset are diagrams of vector spaces over that poset, universal derived equivalence induces derived equivalences between the corresponding categories of pointwise finite-dimensional persistence modules (see Section \ref{Application to persistence modules and parameter extension}). We also use the stability of universal derived equivalence under products (Proposition \ref{Stability under parameter extension}) to adjoin further finite parameters to the pairs produced by admissible cuts (Corollary \ref{Parameter extension for admissible cuts}). This yields infinite families of derived equivalent parameter posets (Example \ref{example Tamari TDA}) and extends the tree-orientation results to multiparameter settings (Corollary \ref{Parameter extension for tree orientation}).

\subsection*{Organization.}
Section 2 introduces the notation and terminology used throughout the paper and recalls Ladkani's construction. Section 3 proves the main characterization theorem, shows that the associated construction is reversible, derives an application to posets of height at most one and their opposites, discusses the singleton specialization and cardinality constraints, and records an invariance property under isomorphisms. Section 4 gives the general criterion for source-clicks to be realized by admissible cuts and then applies this criterion to orientations of finite trees. Section 5 relates these results to persistence modules and studies their stability under parameter extension, with applications to the Tamari example and to orientations of finite trees.

%% file: Raccolta_teoremi_UDE.tex
\section{Preliminaries}

In this section, we introduce the notation and terminology used throughout the paper. We first recall diagram categories indexed by finite posets and the notion of universal derived equivalence. We then recall Ladkani's construction, which provides the main background for the constructions developed in the following section.

For general background on derived categories and homological algebra, see for instance  Weibel \cite{weibel} and Miličić \cite{dercat}.
For module categories, derived equivalences are classically studied through tilting theory and Rickard's derived Morita theory \cite{Rickard}.

\subsection{Diagram categories and universal derived equivalence}

Let $X$ be a finite poset. We regard $X$ as a category whose objects are the elements of $X$, and in which there is a unique morphism $x \to x'$ whenever $x \leq x'$.

The height $h(X)$ of a finite poset $X$ is the largest integer $r$ such that $X$ contains a chain $x_0<x_1< \dots <x_r$ of $r+1$ distinct elements.

If $\mathcal{A}$ is an abelian category, we denote by $\mathcal{A}^X$ the category of $X$-shaped diagrams in $\mathcal{A}$, that is, the functor category $\operatorname{Fun}(X,\mathcal{A})$. Thus, an object of $\mathcal{A}^X$ consists of a family $(A_x)_{x \in X}$ of objects of $\mathcal{A}$, together with morphisms $A_x \to A_{x'}$ for every relation $x \le x'$, compatible  with composition. Since kernels and cokernels in $\mathcal{A}^X$ are computed pointwise, $\mathcal{A}^X$ is abelian whenever $\mathcal{A}$ is abelian. We write $D(\mathcal{A}^X)$ for its derived category. 

Two finite posets $X$ and $Y$ are said to be universally derived equivalent \cite{ladkani} if, for every abelian category $\mathcal{A}$, there is an equivalence of triangulated categories
\[D(\mathcal{A}^X) \simeq D(\mathcal{A}^Y).\] 

Universal derived equivalence is therefore a combinatorial property of the indexing posets, independent of the choice of coefficients or of a particular category of representations.
\subsection{Ladkani's construction} \label{Ladkani's construction}

Given two posets $X$ and $Y$, for $y \in Y$ we define \[[y, \bullet ] \coloneq \{ y' \in Y \,\,\text{such that}\,\,y \leq y' \},\]
\[[\bullet, y ] \coloneq \{ y' \in Y \,\,\text{such that}\,\,y'\leq y \}.\]

Let $X$ and $Y$ be two finite posets and $\{Y_x\}_{x \in X}$ a collection of subsets of $Y$ such that the following properties hold.

\begin{enumerate}[label=(\roman*), ref=(\roman*)]
\item \label{prop:disjoint} $\forall x \in X, \,\forall y \neq y' \in Y_x $, \[
     [y, \bullet ] \cap [y', \bullet ]= \emptyset \text{\,\,\,\,and\,\,\,\,} [\bullet, y ] \cap [\bullet, y' ]= \emptyset;
\]

    \item \label{prop:phi} $\forall x,x' \in X$ such that $x \leq x'$, we have a bijection
    $\varphi_{xx'} \colon Y_x \to Y_{x'}$ such that $\forall y \in Y_{x}$, 
    $
    y \leq \varphi_{xx'}(y)$.
\end{enumerate}

Now, we define two relations $\leq_+$ and $\leq_-$ on the disjoint union $X \sqcup Y$.
These relations coincide with the original orders on $X$ and on $Y$ (i.e. if $x\leq x'$ in $[X,\leq]$, then $x \leq_+x'$ and $x \leq_- x'$, and similarly for elements inside $[Y,\leq]$).
Moreover, if $x\in X$ and $y \in Y$, we define:
\[x \leq_+y \iff \exists \,  y_x \in Y_x \text{\,\,such that\,\,}y_x \leq y,\]
\[y \leq_-x \iff \exists \,  y_x \in Y_x \text{\,\,such that\,\,}y \leq y_x.\]
These are the only mixed relations; in particular, for $x \in X$ and $y \in Y$, we never have $y \leq_+x$ or $x \leq_-y$.

Let $\mathcal{Y}=(Y_x)_{x \in X}$. We denote $X \sqcup Y$, equipped with the relations $\leq_+$ and $\leq_-$, by 
\[(X \sqcup Y)^+_{\mathcal{Y}}\,\,\,\,\, \text{and}\,\,\,\,\,(X \sqcup Y)^-_{\mathcal{Y}},\] respectively. When the family $\mathcal{Y}$ is clear from the context, we omit the subscript.

\begin{lemma}\label{lemma part ord}\cite[Section 1]{ladkani}
    The relations $\leq_+$ and $\leq_-$ defined above are partial orders on $X \sqcup Y$. Hence  $(X \sqcup Y)^+_{\mathcal{Y}}$ and $(X \sqcup Y)^-_{\mathcal{Y}}$ are posets. 
\end{lemma}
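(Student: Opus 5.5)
We verify reflexivity, antisymmetry and transitivity for $\leq_+$ directly from the definition. The argument for $\leq_-$ is the same with all inequalities in $Y$ reversed: replace $y_x \leq y$ by $y \leq y_x$ and use $\varphi_{xx'}$ in the mirrored way. Since $\leq_+$ restricts to the given orders on $X$ and on $Y$, reflexivity is immediate.

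\emph{Antisymmetry.} Mixed relations only go from $X$ to $Y$ in $\leq_+$; we never have $y \leq_+ x$. Hence two elements with $u \leq_+ v$ and $v \leq_+ u$ lie in the same part, $X$ or $Y$, and antisymmetry there is that of the original order.

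\emph{Transitivity.} Take $u \leq_+ v \leq_+ w$ and split according to which of $u,v,w$ lie in $X$ or $Y$. Since no relation goes from $Y$ to $X$, the only possible patterns are $XXX$, $YYY$, $XXY$, $XYY$.
\begin{itemize}
\item The pure cases $XXX$ and $YYY$ follow from transitivity of the original orders.
\item Case $XYY$: here $x \leq_+ y \leq y'$. By definition there is $y_x \in Y_x$ with $y_x \leq y$, so $y_x \leq y'$ by transitivity in $Y$, and hence $x \leq_+ y'$.
\item Case $XXY$: here $x \leq x'$ and $x' \leq_+ y$, witnessed by some $y_{x'} \in Y_{x'}$ with $y_{x'} \leq y$. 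Property \ref{prop:phi} gives a bijection $\varphi_{xx'}\colon Y_x \to Y_{x'}$. Set $y_x \coloneq \varphi_{xx'}^{-1}(y_{x'}) \in Y_x$. Then $y_x \leq \varphi_{xx'}(y_x) = y_{x'} \leq y$, so $x \leq_+ y$.
\end{itemize}

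For $\leq_-$, the nontrivial cases are $YXX$ and $YYX$. For $YXX$ we have $y \leq_- x \leq x'$ with $y \leq y_x$, and we push forward: $y \leq y_x \leq \varphi_{xx'}(y_x) \in Y_{x'}$. For $YYX$ we have $y' \leq y \leq y_x$, so $y' \leq_- x$.

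The only step that needs any input beyond the definitions is the mixed case $XXY$ for $\leq_+$, together with its dual $YXX$ for $\leq_-$. This is exactly where the bijection $\varphi_{xx'}$ of property \ref{prop:phi} is used; in the $XXY$ case it is its inverse that is needed. Property \ref{prop:disjoint} is not needed for the order axioms at all; it matters only later, for the derived equivalence.
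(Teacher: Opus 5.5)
Your proof is correct. The pattern lists ($XXX$, $YYY$, $XXY$, $XYY$ for $\leq_+$ and $XXX$, $YYY$, $YXX$, $YYX$ for $\leq_-$) are exhaustive, since no mixed relation goes the wrong way. The paper itself gives no argument for this lemma and simply cites Ladkani, so your case check is exactly the routine verification it leaves out.

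Your side remark is also accurate. $\leq_+$ only needs every element of $Y_{x'}$ to lie above some element of $Y_x$, $\leq_-$ only needs every element of $Y_x$ to lie below some element of $Y_{x'}$, and \ref{prop:disjoint} plays no role. This matches the paper's later remark that dropping the compatibility condition (c) breaks transitivity of the candidate negative relation precisely in a $YXX$ configuration.
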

We call $(X \sqcup Y)_{\mathcal{Y}}^+$ and $(X \sqcup Y)^-_{\mathcal{Y}}$ the positive and negative posets associated with $\mathcal{Y}$, respectively.

\begin{theorem}\label{ladkani}

\cite[Theorem~1.1]{ladkani}
The positive and negative posets $(X \sqcup Y)^+_{\mathcal{Y}}$ and $(X \sqcup Y)^-_{\mathcal{Y}}$ are universally derived equivalent.

\end{theorem}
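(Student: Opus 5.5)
The plan is to rewrite both diagram categories as comma categories built from one exact functor $S\colon \mathcal{A}^Y\to\mathcal{A}^X$. The derived equivalence will then be an explicit ``cone'' functor, defined by a formula that makes sense for every abelian category $\mathcal{A}$. That formula is what makes the equivalence universal.

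\emph{Step 1: the functor $S$.} For $G\in\mathcal{A}^Y$ I set $(SG)(x)=\bigoplus_{y\in Y_x}G(y)$. For $x\le x'$ the structure map is the direct sum of the maps $G(y)\to G(\varphi_{xx'}(y))$, which exist because $y\le\varphi_{xx'}(y)$. Functoriality needs $\varphi_{x'x''}\circ\varphi_{xx'}=\varphi_{xx''}$. I will prove this from \ref{prop:disjoint}: if $y\in Y_x$ and $y'\in Y_{x''}$ both lie above $y$, then $y'$ is the only element of $Y_{x''}$ with this property, since two of them would have intersecting up-sets. The same uniqueness argument shows $\varphi_{xx}=\mathrm{id}$. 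The functor $S$ is exact, since everything is computed pointwise by finite direct sums.

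\emph{Step 2: comma descriptions.} In $(X\sqcup Y)^+$, the set $X$ is an ideal and $Y$ is a filter. A diagram there therefore amounts to $F\in\mathcal{A}^X$, $G\in\mathcal{A}^Y$, and maps $F(x)\to G(y)$ for every $y$ above some $y_x\in Y_x$, subject to compatibilities. I claim these data are equivalent to a single morphism $f\colon F\to SG$ in $\mathcal{A}^X$. The reason is that $\{y: x\le_+ y\}=\bigsqcup_{y_x\in Y_x}[y_x,\bullet]$, and by \ref{prop:disjoint} this is a disjoint union of subposets, each with a minimum. Hence the limit of $G$ over this set is $\bigoplus_{y_x\in Y_x} G(y_x)=(SG)(x)$. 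Compatibility over $x\le x'$ becomes naturality of $f$, by Step 1.

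Dually, in $(X\sqcup Y)^-$ the set $Y$ is an ideal, and $\{y: y\le_- x\}=\bigsqcup_{y_x\in Y_x}[\bullet,y_x]$ is again a disjoint union of posets with maxima. So a $(X\sqcup Y)^-$-diagram is a triple $(G,F,g\colon SG\to F)$. This gives equivalences $\mathcal{A}^{(X\sqcup Y)^+}\simeq(\mathrm{id}\downarrow S)$ and $\mathcal{A}^{(X\sqcup Y)^-}\simeq(S\downarrow\mathrm{id})$, natural in $\mathcal{A}$.

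\emph{Step 3: the cone functors.} Working with complexes, I define $\Phi(F,G,f)=(G,\operatorname{Cone}(f),\iota)$, where $\iota\colon SG\to\operatorname{Cone}(f)$ is the canonical inclusion. In the other direction I define $\Psi(G,F,g)=(\operatorname{Cone}(g)[-1],G,\pi)$, where $\pi\colon\operatorname{Cone}(g)[-1]\to SG$ is the canonical projection. Both are additive, commute with shifts and cones, and preserve quasi-isomorphisms because $S$ is exact and cones preserve quasi-isomorphisms. They therefore descend to triangulated functors between $D(\mathcal{A}^{(X\sqcup Y)^+})$ and $D(\mathcal{A}^{(X\sqcup Y)^-})$.

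\emph{Step 4, the main obstacle.} It remains to show $\Psi\Phi\cong\mathrm{id}$ and $\Phi\Psi\cong\mathrm{id}$ in the derived categories. For $\Psi\Phi$, the $Y$-component is literally $G$. The $X$-component is $\operatorname{Cone}(\iota)[-1]$, and the standard comparison map $F\to\operatorname{Cone}(SG\to\operatorname{Cone}(f))[-1]$ is a quasi-isomorphism. The point requiring care is that this comparison must be compatible with the structure maps to $SG$ on the nose, or at least up to a natural quasi-isomorphism that is itself a morphism of triples. I would prove it by writing down the explicit chain map and homotopy, and checking that the homotopy lives in the $X$-component only. The argument for $\Phi\Psi$ is dual.

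Since every step uses only finite direct sums, cones and shifts, the argument is uniform in $\mathcal{A}$. This yields $D(\mathcal{A}^{(X\sqcup Y)^+})\simeq D(\mathcal{A}^{(X\sqcup Y)^-})$ for every abelian category $\mathcal{A}$, which is the assertion of Theorem \ref{ladkani}.
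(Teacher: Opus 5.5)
Your proof is correct. The paper gives no argument for this statement; it quotes it from Ladkani. The relevant comparison is therefore with Ladkani's proof, which rests on his calculus of ``formulas''. These are combinatorially specified functors between categories of complexes of diagrams: finite sums of shifted values, with differentials assembled from structure maps and signs. They are shown once and for all to induce triangulated functors for every $\mathcal{A}$, and the theorem is obtained by writing down cone-type formulas in both directions and analysing their composites.

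Your $\Phi$ and $\Psi$ are cone functors of exactly this kind, but you reach them by a cleaner reduction. Step~2 shows that a single exact functor $S$ computes both $\lim G$ over $\{y : x\le_+ y\}=\bigsqcup_{y_x\in Y_x}[y_x,\bullet]$ and $\operatorname{colim} G$ over $\{y : y\le_- x\}=\bigsqcup_{y_x\in Y_x}[\bullet,y_x]$, with the same transition maps. This makes the role of the hypotheses transparent. Condition (i) turns both sets into disjoint unions of posets with a least, respectively greatest, element. Condition (ii) makes the transition maps agree. After this the combinatorics is gone, and what remains is the general fact that $(\mathrm{id}\downarrow S)$ and $(S\downarrow\mathrm{id})$ are derived equivalent for any exact functor $S$ between abelian categories. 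Ladkani's machinery, in exchange, can be reused for many constructions at once and lets one compose such functors combinatorially.

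Two small corrections.

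In Step~1, two elements of $Y_{x''}$ lying above $y$ have intersecting \emph{down}-sets, since both contain $y$. So it is the second half of condition (i) that gives uniqueness of $\varphi$; the up-set half is what you use in Step~2.

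In Step~4, no homotopy needs to be matched with the structure maps. Use the convention $\operatorname{Cone}(u)^n=A^{n+1}\oplus B^n$ with $d(a,b)=(-da,u(a)+db)$. Then the map $\alpha\colon F\to\operatorname{Cone}(\iota)[-1]$, $\alpha(x)=(f(x),-x,0)$, is a chain map, natural in $(F,G,f)$, with $\pi\alpha=f$ on the nose. Hence $(\alpha,\mathrm{id}_G)$ is a morphism of complexes of triples. Being a quasi-isomorphism is checked componentwise. The map $\alpha$ is split by the chain map $(t,x,s)\mapsto -x$, whose kernel (the elements $(t,0,s)$ with $d(t,0,s)=(dt,0,-t-ds)$) is contractible. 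Dually, $(s,y,t)\mapsto -y+g(t)$ is a natural quasi-isomorphism $\operatorname{Cone}(\pi)\to F$ that is compatible with $\iota$ and $g$.
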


If each $Y_x$ is a singleton, say $Y_x=\{f(x)\}$, then the family $\{Y_x\}_{x \in X}$ is uniquely determined by a function $f \colon X \to Y$. In this case, assumption \cref{prop:disjoint} is automatic, while assumption \cref{prop:phi} is equivalent to $f$ being order-preserving. Thus, Theorem \ref{ladkani} specializes to the following result.

\begin{proposition}\cite[Corollary~1.3]{ladkani}\label{Ladkani singleton}
    Let $X$ and $Y$ be finite posets, and let $f \colon X \to Y$ be an order-preserving map. Define two partial orders $\leq_f^+$ and $\leq_f^-$ on the disjoint union $X \sqcup Y$ by requiring that they restrict to the given orders on $X$ and $Y$, and that, for $x \in X$ and $y \in Y$, 
    \[x \leq_f^+ y \iff f(x) \leq y, \,\,\,\,\,\,\,\, y \leq_f^- x \iff y \leq f(x).\]
    Then the two posets $(X \sqcup Y, \leq_f^+)$ and $(X \sqcup Y, \leq_f^-)$ are universally derived equivalent.
\end{proposition}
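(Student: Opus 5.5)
The plan is to obtain the statement as the special case of Theorem~\ref{ladkani} in which every $Y_x$ is a singleton. Set $Y_x \coloneq \{f(x)\}$ for each $x\in X$, so that $\mathcal{Y}=(\{f(x)\})_{x\in X}$. I will check that this family satisfies assumptions \cref{prop:disjoint} and \cref{prop:phi}. Then I will show that the orders $\leq_+$, $\leq_-$ built from it are exactly $\leq_f^+$ and $\leq_f^-$. Theorem~\ref{ladkani} then gives the universal derived equivalence.

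Assumption \cref{prop:disjoint} is a condition on pairs of distinct elements $y\neq y'$ in $Y_x$. Since each $Y_x$ has one element, there are no such pairs and the condition holds vacuously.

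For assumption \cref{prop:phi}, take $x\le x'$ in $X$. The only map $\{f(x)\}\to\{f(x')\}$ is $\varphi_{xx'}\colon f(x)\mapsto f(x')$, and it is a bijection between one-element sets. The remaining requirement is $f(x)\le \varphi_{xx'}(f(x))=f(x')$, which is exactly the hypothesis that $f$ is order-preserving.

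Next I compare the orders. Both constructions restrict to the given orders on $X$ and on $Y$, so only the mixed relations need checking. For $x\in X$ and $y\in Y$, we have $x\le_+ y$ iff some $y_x\in Y_x$ satisfies $y_x\le y$; since $Y_x=\{f(x)\}$, this means $f(x)\le y$, i.e. $x\le_f^+ y$. Dually, $y\le_- x$ iff $y\le f(x)$, i.e. $y\le_f^- x$. Neither construction has relations of the form $y\le x$ in the positive order or $x\le y$ in the negative order.

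So $(X\sqcup Y,\le_f^{\pm})=(X\sqcup Y)^{\pm}_{\mathcal{Y}}$ as sets with relations. By Lemma~\ref{lemma part ord} these relations are partial orders, which settles the well-definedness claim in the statement. Theorem~\ref{ladkani} then yields the universal derived equivalence.

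There is no substantive obstacle here. The only point needing care is checking that the relations coincide exactly, including the absence of reverse mixed relations, so that the identification with the positive and negative posets is an equality and not merely an inclusion.
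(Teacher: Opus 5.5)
Your proposal is correct and matches the paper's route: take the singleton family $Y_x=\{f(x)\}$, observe that assumption (i) holds vacuously and that assumption (ii) is exactly monotonicity of $f$, and then apply Theorem~\ref{ladkani}. Your explicit check that the mixed relations of the positive and negative posets coincide with $\leq_f^{+}$ and $\leq_f^{-}$ is a detail the paper leaves implicit.
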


Ladkani later referred to this order-preserving map construction as a flip-flop \cite{ladkani2}; see also \cite[Section 5]{Goguet} for a recent use of this terminology.

%% file: teorema_principale_e_conseguenze.tex
\section{Admissible cuts and derived equivalences}

This section contains the main result of the paper. We show how, starting from a finite poset $P$, one can recognize directly from its order structure decompositions to which Theorem \ref{ladkani} can be applied, and hence construct new posets universally derived equivalent to $P$. More precisely, given a partition $P=A \sqcup B$, we characterize when the data required by that theorem are already encoded in the order structure of $P$. This leads to the intrinsic notion of an admissible cut.

Once such an admissible cut is found, the required data are recovered from $P$ itself and determine an associated poset $P^-$ universally derived equivalent to $P$. We show that this transformation is reversible once the partition $A \sqcup B$ is retained, and we illustrate the construction with the Tamari lattice $T_3$. We then derive an application to posets of height at most one, study the singleton specialization and some constraints on the cardinalities of the sets $M(a)$ and finally record an invariance property under isomorphisms. 

Throughout this section, $P$ denotes a finite poset and $P=A \sqcup B$ a partition of its underlying set. Whenever needed, $A$ and $B$ are endowed with the partial orders induced from $P$, denoted by $\leq_A$ and $\leq_B$, respectively.

\subsection{The general construction}

Recall that a subset $A \subseteq P$ is called an \textbf{order ideal} if, whenever $a \in A$ and $ x \leq_P a $, then $ x \in A$. Dually, a subset $B \subseteq P$ is called an \textbf{order filter} if, whenever $b \in B $ and $ b \leq_P x$, then $x \in B$. From now on, we simply write ideals and filters.

With the above notation, for a subset $S \subseteq P$, define
\[U_B(S) \coloneq \{b \in B \colon \text{ there exists } s \in S \text{ such that } s \leq_Pb\}.\] When $S=\{x\}$ is a singleton, we write $U_B(x)$ instead of $U_B(\{x\})$. If $S \subseteq B$, then $U_B(S)$ is the upward closure of $S$ in $B$.

For a subset $C \subseteq B$, let $\operatorname{Min}_B(C)$ and $\operatorname{Max}_B(C)$ denote the sets of minimal and maximal elements of $C$, respectively, with respect to $\leq_B$. For every $a \in A$, set
\[M(a) \coloneq \operatorname{Min}_B(U_B(a)).\]

For $ b\in B$, we write 
\[[b, \bullet]_B \coloneq \{b' \in B \colon b \leq_Bb'\}, \,\,\,\,\, [\bullet,b]_B \coloneq \{b'\in B \colon b' \leq_Bb\}.\]

\begin{definition} 
\label{def admissible cuts}
   Let $P$ be a finite poset and let $P=A \sqcup B$ be a partition of its underlying set. 
   We say that $P=A \sqcup B$ is an \textbf{admissible cut} if the following conditions are satisfied:
\begin{enumerate}
    \item [(a)] Ideal-filter condition: $A$ is an ideal of $P$ and $B$ is a filter of $P$;
    \item [(b)]  Separation condition: for every $ a \in A$ and every pair of distinct elements $m, m' \in M(a)$, \[[m, \bullet]_B \cap [m', \bullet]_B = \emptyset \,\,\,\, \text{ and } \,\,\,\, [\bullet, m]_B \cap [\bullet, m']_B = \emptyset.\]
    \item [(c)] Compatibility condition: for every $a \leq_A a'$ and every $m \in M(a)$, there exists $m'\in~M(a')$ such that $m \leq_B m'$.
\end{enumerate}
\end{definition}

\begin{theorem} 
\label{generalizzazione teomio}
    Let $P$ be a finite poset, and let $P=A \sqcup B$ be a partition of its underlying set.

     The following are equivalent: 
     \begin{enumerate}
    \item There exists a family $\mathcal{B}=(B_a)_{a \in A}$ of subsets of $B$ satisfying the hypotheses of Ladkani's construction such that $P=(A \sqcup B)^+_{\mathcal{B}}$.
    
    \item $P=A \sqcup B$ is an admissible cut.
    \end{enumerate}
   Moreover, whenever these equivalent conditions hold, the subsets $B_a$ in $(i)$ are necessarily given by 
   \[B_a=M(a) \,\, \text{ for every } \,\, a \in A.\]

\end{theorem}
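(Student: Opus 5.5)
We prove the two implications separately and obtain the uniqueness statement along the way from (i)$\Rightarrow$(ii).

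\textbf{(i)$\Rightarrow$(ii) and uniqueness.} Assume $P=(A\sqcup B)^+_{\mathcal{B}}$ for a family $\mathcal{B}=(B_a)_{a\in A}$ satisfying \cref{prop:disjoint} and \cref{prop:phi}.

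The ideal-filter condition is immediate. In the positive poset no relation $b\le_+ a$ with $b\in B$, $a\in A$ exists. Hence nothing in $B$ lies below an element of $A$, and nothing in $A$ lies above an element of $B$.

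Next we identify $B_a$. By definition of $\le_+$, we have $U_B(a)=\bigcup_{y\in B_a}[y,\bullet]_B$, so $\operatorname{Min}_B(U_B(a))\subseteq B_a$. For the reverse inclusion, let $y\in B_a$. If $y$ were not minimal in $U_B(a)$, then $y'<y$ for some $y'\in B_a$. Since $y\ne y'$, this gives $y\in[y,\bullet]\cap[y',\bullet]$, contradicting \cref{prop:disjoint}. Hence $B_a=M(a)$, which proves the uniqueness claim.

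With this identification, the separation condition is exactly \cref{prop:disjoint}. The compatibility condition follows from \cref{prop:phi}: given $a\le a'$ and $m\in M(a)$, take $m'=\varphi_{aa'}(m)\in M(a')$, which satisfies $m\le m'$.

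\textbf{(ii)$\Rightarrow$(i).} Assume the cut is admissible and set $B_a\coloneq M(a)$. Condition \cref{prop:disjoint} is the separation condition.

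For \cref{prop:phi}, fix $a\le a'$. Define $\varphi_{aa'}(m)$ to be the element $m'\in M(a')$ with $m\le m'$ given by compatibility. This element is unique: two such elements $m'\neq m''$ would both lie in $[m,\bullet]_B$, so $[m',\bullet]_B$ and $[m'',\bullet]_B$ would meet, contradicting separation for $a'$.

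We show $\varphi_{aa'}$ is a bijection.
\begin{itemize}
\item[$\bullet$] \emph{Injectivity.} If $m_1\ne m_2$ in $M(a)$ have the same image $m'$, then $m'\in[m_1,\bullet]\cap[m_2,\bullet]$, contradicting separation for $a$.
\item[$\bullet$] \emph{Surjectivity.} Let $m'\in M(a')$. We first claim $U_B(a')\subseteq U_B(a)$ would suffice, but actually the relevant inclusion is the other one: $a\le a'$ gives $U_B(a')\subseteq U_B(a)$. So $m'\in U_B(a)$, and by finiteness there is $m\in M(a)$ with $m\le m'$. Then $\varphi_{aa'}(m)$ and $m'$ both lie in $M(a')$ and both lie in $[m,\bullet]_B$. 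By separation for $a'$ they are equal.
\end{itemize}

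Finally we compare the orders. We must show $P$ coincides with $(A\sqcup B)^+_{\mathcal{B}}$, which by Lemma \ref{lemma part ord} is a poset. On $A$ and on $B$ both orders are the induced ones.
\begin{itemize}
\item[$\bullet$] \emph{Relations from $B$ to $A$.} There are none in either order: by the ideal-filter condition none exist in $P$, and none exist in the positive poset by construction.
\item[$\bullet$] \emph{Relations from $A$ to $B$.} Suppose $a\le_P b$. Then $b\in U_B(a)$, and by finiteness $b$ lies above some element of $M(a)$, so $a\le_+ b$. Conversely, if $m\le b$ with $m\in M(a)$, then $a\le_P m\le b$ by transitivity.
\end{itemize}

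The main care is needed in the surjectivity of $\varphi_{aa'}$. There one must use the inclusion $U_B(a')\subseteq U_B(a)$ together with the separation condition at the level $a'$, not merely the compatibility condition.
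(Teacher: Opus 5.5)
Your proof has the same structure as the paper's: $B_a=M(a)$ via minimality, the separation and compatibility conditions read off from Ladkani's hypotheses, and, for the converse, $\varphi_{aa'}$ defined through compatibility, a bijectivity check, and $U_B(a)=U_B(M(a))$. There is, however, one incorrect inference. It occurs where you prove that $\varphi_{aa'}(m)$ is unique, and you use it again in the surjectivity step.

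If $m'\neq m''$ in $M(a')$ both lie in $[m,\bullet]_B$, then $m$ is a common \emph{lower} bound of $m'$ and $m''$, i.e.\ $m\in[\bullet,m']_B\cap[\bullet,m'']_B$. It does not follow that $[m',\bullet]_B$ and $[m'',\bullet]_B$ meet. The fix is to invoke the lower-cone half of the separation condition at $a'$, which is exactly what the paper does.

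This is not cosmetic, because the upper-cone half alone cannot give uniqueness. Take $A=\{a<a'\}$ and $B=\{m,m',m''\}$ with $m<m'$, $m<m''$ and $m',m''$ incomparable, and add the relations $a<m$, $a'<m'$, $a'<m''$. Then:
\begin{itemize}
\item $A$ is an ideal and $B$ is a filter;
\item $M(a)=\{m\}$ and $M(a')=\{m',m''\}$;
\item compatibility holds, and the upper cones $\{m'\}$ and $\{m''\}$ are disjoint;
\item yet there is no bijection $M(a)\to M(a')$.
\end{itemize}
So lower-cone disjointness is precisely what makes $\varphi_{aa'}$ well defined. With that correction the proof is complete and coincides with the paper's. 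You should also rewrite the self-contradictory sentence in the surjectivity step: the needed inclusion is simply that $a\le a'$ gives $U_B(a')\subseteq U_B(a)$.
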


\begin{proof}
Let $P$ be a finite poset.
\item[\boxed{(i)\Rightarrow(ii)}:]
Assume that a family $(B_a)_{a \in A}$ as in $(i)$ exists. By assumption, for every $a \in A$ and $b \in B$, \begin{equation} \label{defP+}
 a \leq_P b \iff \text{ there exists }\, y \in B_a \text{ such that } y \leq_B b.
 \end{equation}
 \begin{itemize}
 \item \textbf{\underline{Step 1: Ideal-filter condition.}} 
 
 In the poset $(A \sqcup B, \leq_+)$ the only mixed  relations are of the form \[a \leq_+b, \,\,\,\,\,\,\,\, a \in A,\,\, b \in B.\]
 
  If $x \leq_P a$ with $a \in A$, then $x$ cannot belong to $B$. Hence $x \in A$, and therefore $A$ is an ideal of $P$. Dually, if $b \leq_Px$ with $b \in B$, then $x$ cannot belong to $A$. Hence $x \in B$, and therefore $B$ is a filter of $P$.
 
\item \textbf{\underline{Step 2: $B_a=M(a)$ for every $a \in A$.}}

Fix $a \in A$. By (\ref{defP+}), we have \[U_B(a)=\{b \in B \colon a \leq_P b\}=\{b \in B \colon \text{ there exists } \, y \in B_a \text{ such that } y \leq_B b\}=U_B(B_a).\]
We first prove that $B_a \subseteq M(a)$. Let $y \in B_a$. Since $y \leq_By$, we have $y \in U_B(B_a)=U_B(a)$. Suppose that $z \in U_B(a)$ and $z \leq_B y$. Since $z \in U_B(a)=U_B(B_a)$, there exists $y' \in B_a$ such that $y' \leq_B z \leq_B y$. If $y' \neq y$, then $y \in [y', \bullet]_B \cap [y, \bullet]_B$, contradicting condition (i) of the construction in Section \ref{Ladkani's construction}, applied to the distinct elements $y,y' \in B_a$. Hence $y'=y$, and from $y \leq_B z \leq_B y$ we get $z=y$. So $y$ is minimal in $U_B(a)$, i.e. $y \in M(a)$.

Conversely, let $z \in M(a)$. Since $z \in U_B(a)= U_B(B_a)$, there exists $y \in B_a$ such that $y \leq_Bz$. By the inclusion already proved, $y \in M(a)$. Since $z$ is minimal in $U_B(a)$, while $y \in U_B(a)$ and $y \leq_B z$, we obtain $y=z$. Hence $z \in B_a$.

Therefore, $B_a=M(a)$.

\item \textbf{\underline{Step 3: Separation condition.}} Since $B_a=M(a)$ for every $a \in A$, condition (b) is exactly condition (i) of the construction in Section \ref{Ladkani's construction} applied to the family $(B_a)_{a \in A}$.

\item \textbf{\underline{Step 4: Compatibility condition.}} Let $a \leq a'$ in $A$. By condition (ii) of the construction in Section \ref{Ladkani's construction}, there exists a bijection $\varphi_{a,a'} \colon B_a \to B_{a'}$ such that $y \leq_B \varphi_{a,a'}(y)$ for every $y \in B_a$. Since $B_a=M(a)$ and $B_{a'}=M(a')$, for every $m \in M(a)$ the element $\varphi_{a,a'}(m)$ belongs to $M(a')$ and satisfies $m \leq_B \varphi_{a,a'}(m)$. Thus condition (c) holds.
\end{itemize}

Therefore, the partition $P=A \sqcup B$ satisfies the conditions $(a)$, $(b)$ and $(c)$, and is an admissible cut.
 
 \item[\boxed{(ii)\Rightarrow(i)}:] Assume that $P=A \sqcup B$ is an admissible cut. Define, for each $a  \in A$, \[B_a \coloneq M(a).\]
 We show that $(B_a)_{a \in A}$ satisfies the hypotheses of Section \ref{Ladkani's construction} and that the resulting positive poset coincides with $P$. 
\begin{itemize}
 \item \textbf{\underline{Step 1: Verification of condition (i).}} By definition $B_a=M(a)$. For every $a \in A$ and every $m \neq m' \in B_a=M(a)$,
 \[[m, \bullet]_B \cap [m', \bullet]_B = \emptyset, \,\,\,\,\, [\bullet,m]_B \cap [\bullet,m']_B = \emptyset.\]
 \item \textbf{\underline{Step 2: Verification of condition (ii).}} Let $a \leq a'$ in $A$. 
 
 We construct $\varphi_{a,a'} \colon M(a) \to M(a')$ with $m \leq_B \varphi_{a,a'}(m)$ for every $m \in M(a)$, and show it is a bijection.

 By condition (c), for each $m \in M(a)$ there exists at least one $m' \in M(a')$ with $m \leq_Bm'$.

 \textit{Uniqueness of $m'$}: suppose $m \leq_B m'_1$ and $m \leq_B m_2'$ with $m_1',m_2' \in M(a')$. Then $m \in [\bullet,m_1']_B \cap [\bullet,m_2']_B$. 
 
 By condition (b) applied to $a'$, this intersection is empty whenever $m_1' \neq m_2'$. Hence $m_1' = m_2'$.

 Then we set $\varphi_{a,a'}(m) \coloneq m'$, where $m'$ is the unique element in $M(a')$ with $m \leq_Bm'$.
 By construction, $m \leq_B \varphi_{a,a'}(m)$ for every $m \in M(a)$.
 
 \textit{Injectivity:} Suppose $\varphi_{a,a'}(m_1)=\varphi_{a,a'}(m_2)=m'$. Then $m' \in [m_1, \bullet]_B \cap [m_2, \bullet]_B$. By condition (b) applied to $a$, this intersection is empty whenever $m_1 \neq m_2$. Hence $m_1=m_2$.

 \textit{Surjectivity:} Let $m' \in M(a')$. From $a \leq_P a' \leq_Pm'$, we have $m' \in U_B(a)$. Since $P$ is finite, every element of a nonempty subset lies above at least one minimal element of that subset. Applied to $U_B(a)$, this gives an element $m \in M(a)$ such that $m \leq_Bm'$. By uniqueness, $\varphi_{a,a'}(m)=m'$. 

 Therefore $\varphi_{a,a'}$ is a bijection satisfying \[m \leq_B \varphi_{a,a'}(m)\] for every $m \in M(a)$. Hence the family $(B_a)_{a \in A}$, with $B_a=M(a)$, satisfies condition $(ii)$ of Section \ref{Ladkani's construction}.

 \item \textbf{\underline{Step 3: Reconstruction of the mixed relations.}}
 Let $P^+=(A \sqcup B, \leq_+)$ be the positive poset associated with the family $B_a=M(a)$. The orders inside $A$ and $B$ coincide in $P$ and $P^+$ by construction, so it suffices to compare the mixed relations.

 Let $a \in A$ and $b \in B$. By definition of $\leq_+$, 
 \begin{equation} \label{defP+M(a)} 
 a \leq_+b \iff \exists m \in M(a) \text{ with } m \leq_Bb.
 \end{equation}
 Since $U_B(a)$ is upward closed in $B$ and $B$ is finite, every element of $U_B(a)$ lies above a minimal element of $U_B(a)$. Since \[M(a) = \operatorname{Min}_B(U_B(a)),\] it follows that \[U_B(a)=U_B(M(a)).\]

 Combining (\ref{defP+M(a)}) with the equality $U_B(a)=U_B(M(a))$, we obtain, for every $a \in A$ and $b \in B$, \[a \leq_+b \iff b \in U_B(M(a)) \iff b \in U_B(a) \iff a \leq_Pb.\]

 Finally, there are no mixed relations of the form $b \leq a$ (with $b \in B, a \in A$) in $P$ (because $A$ is an ideal by (a)) nor in $P^+$ (by construction). Hence $P$ and $P^+$ have the same partial order, i.e., $P^+=P$.

 \end{itemize}
\end{proof}
For a fixed admissible cut $P=A \sqcup B$, let $\mathcal{M}=(M(a))_{a \in A}$. We denote the positive and negative posets associated with this family by \[P^+ \coloneq (A \sqcup B)^+_{\mathcal{M}}, \,\,\,\,\,\,\, P^- \coloneq (A \sqcup B)^-_{\mathcal{M}}.\] By Theorem \ref{generalizzazione teomio}, $P^+=P$.

Since $P$ is finite, Theorem \ref{generalizzazione teomio} gives a finite procedure for detecting admissible cuts. For a given partition $P=A \sqcup B$, one computes the sets $M(a)$ and checks conditions (a)--(c). By running this test over the finitely many partitions of the underlying set of $P$, one can determine all admissible cuts of $P$ and hence all associated negative posets $P^-$.

 The following corollary describes $P^-$ explicitly.

\begin{corollary} 
\label{The associated universally derived equivalent poset}
Let $P$ be a finite poset and let $P=A \sqcup B$ be an admissible cut. Then the mixed relations of the associated negative poset $P^-$ are given by
\[b \leq_-a \iff \text{ there exists } m \in M(a) \text{ such that } b \leq_B m.\]
Moreover, $P^-$ is universally derived equivalent to $P$.
\end{corollary}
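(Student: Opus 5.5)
The corollary follows almost directly from Theorem \ref{generalizzazione teomio}, the definition of the negative poset in Section \ref{Ladkani's construction}, and Theorem \ref{ladkani}. I would proceed in two short steps.

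\textbf{Mixed relations of $P^-$.} Since $P=A\sqcup B$ is an admissible cut, the (ii)$\Rightarrow$(i) direction of Theorem \ref{generalizzazione teomio} shows that the family $\mathcal{M}=(M(a))_{a\in A}$ satisfies conditions \ref{prop:disjoint} and \ref{prop:phi} of Ladkani's construction. This is exactly what Steps 1 and 2 of that proof established: separation gives \ref{prop:disjoint}, and the bijections $\varphi_{a,a'}$ give \ref{prop:phi}. Consequently $P^-=(A\sqcup B)^-_{\mathcal{M}}$ is well defined, and by Lemma \ref{lemma part ord} it is a poset. By the definition of $\leq_-$, with $X=A$, $Y=B$ and $Y_a=M(a)$, the only mixed relations are
\[ b\leq_- a \iff \exists\, m\in M(a) \text{ with } b\leq_B m, \]
and there are no relations $a\leq_- b$. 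The orders on $A$ and $B$ are the induced orders $\leq_A$ and $\leq_B$. This gives the explicit description claimed.

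\textbf{Universal derived equivalence.} Theorem \ref{ladkani} applied to the family $\mathcal{M}$ shows that $(A\sqcup B)^+_{\mathcal{M}}$ and $(A\sqcup B)^-_{\mathcal{M}}$ are universally derived equivalent. Step 3 of the proof of Theorem \ref{generalizzazione teomio} identifies $(A\sqcup B)^+_{\mathcal{M}}$ with $P$ itself as a poset. Hence $P$ and $P^-$ are universally derived equivalent.

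There is no real obstacle here. The only point needing care is to invoke the \emph{constructive} direction of Theorem \ref{generalizzazione teomio}: one needs that the specific family $B_a=M(a)$ satisfies Ladkani's hypotheses and reproduces $P$, not merely that some family exists. The "moreover" clause of the theorem also supplies this, since it forces any such family to equal $(M(a))_{a\in A}$.
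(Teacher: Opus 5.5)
Your proposal is correct and follows essentially the same route as the paper. You read off the mixed relations from the definition of $\leq_-$ with $Y_a = M(a)$, use Theorem \ref{generalizzazione teomio} to get $P^+ = P$, and then apply Lemma \ref{lemma part ord} and Theorem \ref{ladkani}; your extra remark that the specific family $(M(a))_{a\in A}$ (and not merely some family) satisfies Ladkani's hypotheses is a welcome bit of care that the paper leaves implicit.
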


\begin{proof}
The description of the mixed relations follows directly from the definition of $P^-$.
By Theorem \ref{generalizzazione teomio}, $P^+=P$. By Lemma \ref{lemma part ord}, $P^-$ is a poset, and by Theorem \ref{ladkani}, $P^+$ and $P^-$ are universally derived equivalent. Hence $P$ and $P^-$ are universally derived equivalent.
\end{proof}

The next result shows that the construction is reversible once the partition $A \sqcup B$ is retained. More precisely, the data $M(a)$, and hence the original mixed relations of $P$, can be recovered from the associated negative poset $P^-$.

\begin{proposition}
\label{Reversibility of the construction}

Let $P$ be a finite poset, let $P=A \sqcup B$ be an admissible cut, and let $P^-$ be the associated negative poset. Then, in $P^-$, $B$ is an ideal and $A$ is a filter. For every $a \in A$, set \[L_B^-(a) \coloneq \{b \in B \colon b \leq_-a\}.\]
Then
\[M(a)=\operatorname{Max}_B(L_B^-(a)).\]
Consequently, the original poset $P$ is uniquely determined by $P^-$ together with the partition $A \sqcup B$. More precisely, the orders induced on $A$ and $B$ are unchanged, and, for every $a \in A$ and $b \in B$,
\[a \leq_Pb \iff \text{ there exists } m \in \operatorname{Max}_B(L^-_B(a)) \text{ such that } m \leq_B b.\]
    
\end{proposition}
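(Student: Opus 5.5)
The argument works directly from the definition of $\leq_-$ together with the description of the mixed relations of $P^-$ in Corollary \ref{The associated universally derived equivalent poset}. The orders induced on $A$ and on $B$ are unchanged in $P^-$ by construction.

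\emph{Ideal and filter.} In $P^-$ the only mixed relations have the form $b \leq_- a$ with $b \in B$ and $a \in A$; we never have $a \leq_- b$. Suppose $x \leq_- b$ with $b \in B$. If $x$ were in $A$, this would be a mixed relation $a \leq_- b$, which does not occur. Hence $x \in B$, so $B$ is an ideal of $P^-$. Dually, if $a \leq_- x$ with $a \in A$, then $x \in A$, so $A$ is a filter. This repeats Step 1 of the proof of Theorem \ref{generalizzazione teomio} with the roles of $A$ and $B$ swapped.

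\emph{Recovering $M(a)$.} Fix $a \in A$. By Corollary \ref{The associated universally derived equivalent poset},
\[
L_B^-(a)=\{b\in B : \exists\, m\in M(a),\ b\leq_B m\},
\]
the downward closure of $M(a)$ in $B$. We show $\operatorname{Max}_B(L_B^-(a)) = M(a)$.
\begin{itemize}
\item \emph{$M(a) \subseteq L_B^-(a)$ and each $m \in M(a)$ is maximal there.} Take $m \in M(a)$ and suppose $m \leq_B b$ with $b \in L_B^-(a)$. Then $b \leq_B m'$ for some $m' \in M(a)$, so $m \leq_B m'$. If $m \neq m'$, then $m \in [\bullet,m]_B \cap [\bullet,m']_B$, contradicting the separation condition (b). Alternatively, $m$ and $m'$ are distinct minimal elements of $U_B(a)$, so $m \leq_B m'$ forces $m = m'$. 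Either way $m = m'$, and then $m \leq_B b \leq_B m$ gives $b = m$.
\item \emph{Every maximal element of $L_B^-(a)$ lies in $M(a)$.} If $b$ is maximal in $L_B^-(a)$, then $b \leq_B m$ for some $m \in M(a)$. Since $m \in L_B^-(a)$, maximality gives $b = m$.
\end{itemize}
Note that only minimality of elements of $M(a)$ is really needed here, not the full separation condition.

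\emph{Reconstruction.} Theorem \ref{generalizzazione teomio} gives $P = P^+$ with $B_a = M(a)$, so
\[
a \leq_P b \iff \exists\, m \in M(a),\ m \leq_B b.
\]
Substituting $M(a) = \operatorname{Max}_B(L_B^-(a))$ yields the stated formula. Since the orders on $A$ and $B$ are read off from $P^-$ and $P$ has no relations $b \leq a$ (because $A$ is an ideal), $P$ is determined by $P^-$ together with the partition.

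No step is a genuine obstacle. The only point needing care is the antisymmetry argument showing that elements of $M(a)$ are maximal in $L_B^-(a)$.
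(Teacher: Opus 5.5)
Your proof is correct and follows essentially the same route as the paper: the ideal/filter claim comes from the absence of mixed relations $a \leq_- b$, $L_B^-(a)$ is identified as the downward closure of $M(a)$ in $B$, and the two inclusions are proved via maximality and the fact that $M(a)$ is an antichain (the paper also uses minimality, not separation, at that step), after which you substitute into $a \leq_P b \iff \exists\, m\in M(a),\ m\leq_B b$. The only cosmetic difference is that you obtain this last equivalence by citing $P=P^+$ from Theorem \ref{generalizzazione teomio}, whereas the paper re-derives it from the fact that $U_B(a)$ is the upward closure of its minimal elements.
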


\begin{proof}
By the definition of the negative order, the only mixed relations in $P^-$ are of the form \[b \leq_-a, \,\,\,\, b \in B, \, a \in A.\]
Hence no element of $A$ lies below an element of $B$ in $P^-$. It follows that $B$ is an ideal of $P^-$. Dually, $A$ is a filter of $P^-$.

Now fix $a \in A$. By the definition of the negative order, \[L^-_B(a)= \bigcup_{m \in M(a)}[\bullet,m]_B.\]
    We first prove that every maximal element of $L^-_B(a)$ belongs to $M(a)$. Let $b\in L_B^-(a)$ be maximal. Then there exists $m \in M(a)$ such that $b \leq_Bm$. Since $m \in L_B^-(a)$, the maximality of $b$ implies $b=m$. Therefore
    \[\operatorname{Max}_B(L^-_B(a)) \subseteq M(a).\] Conversely, let $m \in M(a)$. Then $m \in L_B^-(a)$. Suppose that $m \leq_Bb$ for some $b \in L_B^-(a)$. By the definition of $L^-_B(a)$, there exists $m' \in M(a)$ such that $b\leq_B m'$. Hence $m \leq_Bb\leq_Bm'.$ Since $m,m' \in M(a)$, and $M(a)$ consists of the minimal elements of $U_B(a)$, the relation $m \leq_Bm'$ implies $m=m'$. It follows that $b=m$. Hence $m$ is maximal in $L^-_B(a)$, and therefore \[M(a) \subseteq \operatorname{Max}_B(L_B^-(a)).\]
    This proves \[M(a)=\operatorname{Max}_B(L^-_B(a)).\]
    The passage from $P$ to $P^-$ does not change the orders induced on $A$ and $B$. Moreover, since $A$ is an ideal of $P$, there are no mixed relations of the form $b \leq_Pa$, with $b \in B$ and $a \in A$.

    It remains to recover the mixed relations from $A$ to $B$. Since $B$ is finite and $U_B(a)$ is upward closed, it is the upward closure of its minimal elements. Hence
    \[a \leq_Pb \iff b \in U_B(a) \iff \text{ there exists } m \in M(a) \text{ such that } m\leq_Bb.\]
    Using the equality proved above, \[M(a)=\operatorname{Max}_B(L_B^-(a)),\] we obtain  
    \[a \leq_Pb \iff \text{ there exists } m \in \operatorname{Max}_B(L^-_B(a)) \text{ such that } m \leq_B b.\]
    Thus all the relations of $P$ can be reconstructed from $P^-$ and the partition $A \sqcup B$.
\end{proof}

Thus, once the partition $A \sqcup B$ is retained, passing from $P$ to $P^-$ does not lose any information. Notice that the roles of the two parts are reversed: $A$ is an ideal and $B$ is a filter in $P$, whereas $B$ is an ideal and $A$ is a filter in $P^-$. Correspondingly, the elements of $M(a)$, which are the minimal elements of $B$ lying above $a$ in $P$, are recovered as the maximal elements of $B$ lying below $a$ in $P^-$. In this sense, the reconstruction is dual to the original construction.

This duality can also be expressed by passing to opposite posets. Indeed, if $Q=(P^-)^{\operatorname{op}},$ then $A$ is an ideal and $B$ is a filter in $Q$, and for every $a \in A$, 
\[\operatorname{Min}_{B^{\operatorname{op}}}\{b \in B \colon a \leq_Qb\}= \operatorname{Max}_B(L^-_B(a))=M(a).\]
Thus, the data recovered from $P^-$ are precisely the order-dual counterparts of the data used in the original construction of Theorem \ref{generalizzazione teomio}.

\begin{example} \label{T3/D5}
    Consider the five-element Tamari lattice $T_3$ \cite{Geyer}, labelled as in the diagram on the left. Let 
    \[A=\{1,2,3,4\}, \,\,\, B=\{5\}.\]
    Since $5$ is the greatest element of $T_3$, the subset $A$ is an ideal and $B$ is a filter. Moreover, for every $a \in A$,
    \[U_B(a)=\{5\}, \,\,\, \text{ and hence }\,\,\, M(a)=\{5\}.\]
    The separation condition is vacuous, while the compatibility condition is immediate. Therefore $T_3=A \sqcup B$ is an admissible cut.
\[\begin{tikzcd}[sep=small]
	&& {5 \bullet} &&&&&&&&&&&& \\
	{4 \bullet} &&&& {3 \bullet} &&&& {4 \bullet} && {2\bullet} && {1 \bullet} && {3 \bullet} \\
	\\
	{2 \bullet} &&&&&&&&&&&& {5 \bullet} \\
	&& {1 \bullet}
	\arrow[from=2-1, to=1-3]
	\arrow[from=2-5, to=1-3]
	\arrow[from=2-11, to=2-9]
	\arrow[from=2-13, to=2-11]
	\arrow[from=2-13, to=2-15]
	\arrow[from=4-1, to=2-1]
	\arrow[from=4-13, to=2-13]
	\arrow[from=5-3, to=2-5]
	\arrow[from=5-3, to=4-1]
\end{tikzcd}\]
The poset displayed on the right is the associated negative poset $T_3^-$. Indeed, the order induced on $A$ is unchanged, while $5 \leq_-a$ for every $a \in A$. By Corollary  \ref{The associated universally derived equivalent poset}, $T_3$ and $T_3^-$ are universally derived equivalent.

The underlying undirected Hasse graph of $T_3$ is a cycle, whereas that of $T_3^-$ is the Dynkin tree $D_5$. By Proposition \ref{Reversibility of the construction}, however, the original Tamari lattice $T_3$ can be reconstructed from $T_3^-$ together with the partition $A \sqcup B$. Indeed, the unique element $5 \in B$ is the maximal element of $B$ lying below every $a \in A$ in $T_3^-$, so that $M(a)=\{5\}$ is recovered for every $ a \in A$.
\end{example}

\begin{remark}
    In Definition \ref{def admissible cuts}, condition $(c)$ does not follow from conditions $(a)$ and $(b)$. Let $B=\{u,v,w\}$, where $v < w$ and $u$ is incomparable with both $v$ and $w$, and let $A=\{a,a'\}$ with $a <a'$. Let $P=A \sqcup B$ be the poset obtained by retaining the orders on $A$ and $B$ and adding the relations $a<u$, $a<v$ and $a'<w$. Then $A$ is an ideal and $B$ is a filter, so condition $(a)$ holds. Moreover, $M(a)=\{u,v\}$, while $M(a')=\{w\}$. Condition $(b)$ holds, whereas condition $(c)$ fails for $u \in M(a)$, since $u \not\leq_B w$. 
    \\The same example shows that condition $(c)$ is needed not only to recover the bijection required in condition $(ii)$ of Section \ref{Ladkani's construction}, but also to ensure that the candidate negative relation defines a partial order: without $(c)$, transitivity can fail. Indeed, define a relation $\preceq$ on $A \sqcup B$ by retaining the orders on $A$ and $B$, with the only mixed relations given by \[b \preceq x \iff \text{ there exists } m \in M(x) \text{ such that } b \leq_B m,\] for $x \in A$ and $b \in B$. Then 
    \[u \preceq a \preceq a',\] while $u \npreceq a'$, since $M(a')=\{w\}$ and $u \nleq_Bw$. Thus $\preceq$ is not transitive and does not define a partial order.
\end{remark}
\\[-0.1 cm]
We conclude this subsection with another direct application of Theorem \ref{generalizzazione teomio}. For a finite poset $P$, let $P^{\operatorname{op}}$ denote the poset that has the same underlying set as $P$, with
\[x \leq_{P^{\operatorname{op}}}y \iff y \leq_Px.\]
Recall that a finite poset $P$ has height at most one when it contains no chain $x<y<z$ of three distinct elements. Equivalently, every element of $P$ is minimal or maximal.
A subset $C$ of a poset is called an antichain if no two distinct elements of $C$ are comparable.

For diagrams of finite-dimensional vector spaces over a field, the corresponding equivalence between bounded derived categories already follows from Ladkani \cite[Corollary 4.18]{ladkani3}. The following corollary gives the universal version directly from Theorem \ref{generalizzazione teomio}.

\begin{corollary} 
\label{Two-level posets and opposites}
    Let $P$ be a finite poset of height at most one. Then $P$ and $P^{\operatorname{op}}$ are universally derived equivalent.

    More precisely, if $A=\operatorname{Min}(P)$, $B=P \setminus A$, then $P=A \sqcup B$ is an admissible cut and the associated negative poset is $P^-=P^{\operatorname{op}}$. 
\end{corollary}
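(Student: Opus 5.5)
The plan is to verify the three conditions of Definition \ref{def admissible cuts} directly for the partition $A=\operatorname{Min}(P)$, $B=P\setminus A$. Then we compute $P^-$ via Corollary \ref{The associated universally derived equivalent poset} and compare it with $P^{\operatorname{op}}$. The universal derived equivalence then follows immediately from that corollary. Elements that are both minimal and maximal (isolated points) lie in $A$, and $B$ may be empty; neither case causes trouble.

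\emph{Preliminary structural facts.} First, $A$ is an antichain, since it consists of minimal elements. Next, every $b\in B$ is non-minimal, so there is some $y<_P b$. If $b<_P x$ for some $x$, we would get a chain $y<b<x$, contradicting height at most one; hence every element of $B$ is maximal in $P$. It follows that $B$ is an antichain. Moreover, any strict relation $x<_P y$ has $x\in A$ and $y\in B$. Indeed, $x$ is not maximal, so $x\notin B$; and $y$ is not minimal, so $y\notin A$.

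\emph{Checking (a)--(c).} For (a): if $x\le_P a$ with $a\in A$, then $x=a$ by minimality, so $A$ is an ideal. If $b\le_P x$ with $b\in B$, then $x=b$ by maximality, so $B$ is a filter. Since $B$ is an antichain, $U_B(a)$ is itself an antichain, and so $M(a)=U_B(a)=\{b\in B: a<_P b\}$. For (b): $[m,\bullet]_B=\{m\}=[\bullet,m]_B$ for every $m\in B$, so these sets are disjoint for distinct $m,m'$. For (c): $a\le_A a'$ forces $a=a'$ because $A$ is an antichain, so we may take $m'=m$.

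\emph{Identifying $P^-$.} By Corollary \ref{The associated universally derived equivalent poset}, $b\le_- a$ holds iff $b\le_B m$ for some $m\in M(a)$. Since $B$ is an antichain, this means $b\in M(a)$, i.e.\ $a<_P b$. The induced orders on $A$ and $B$ are discrete, so they agree with their opposites. In $P^-$ there are no relations $a\le_- b$, just as in $P^{\operatorname{op}}$ there are none, because $P$ has no relations $b<_P a$ ($A$ is an ideal). Hence $P^-=P^{\operatorname{op}}$.

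There is no real obstacle here. The only point needing care is the observation that height at most one forces every non-minimal element to be maximal. This gives both the filter property of $B$ and the fact that $B$ is an antichain, which collapses $M(a)$ to $U_B(a)$.
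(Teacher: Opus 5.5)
Your proposal is correct and follows essentially the same route as the paper: show $A$ is an antichain ideal, use height at most one to force every element of $B$ to be maximal (so $B$ is an antichain filter and $M(a)=U_B(a)$), check (b) and (c) trivially, and then read off $b\le_- a \iff a\le_P b$ to get $P^-=P^{\operatorname{op}}$. Your remarks about isolated points and the case $B=\emptyset$ only make explicit what the paper leaves implicit.
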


\begin{proof}
    Since $A=\operatorname{Min}(P)$, no two distinct elements of $A$ are comparable, and hence $A$ is an antichain. Moreover, if $x \leq_Pa$ for some $a \in A$, then the minimality of $a$ implies $x=a$. Therefore $A$ is an ideal of $P$.

    We first observe that every element of $B$ is maximal in $P$. Indeed, let $b \in B$. Since $b$ is not minimal, there exists $x \in P$ such that $x<_Pb$. If $b$ were not maximal, there would also exist $y \in P$ such that $b <_Py$, giving a chain 
    \[x<_Pb<_Py\] of three distinct elements, contradicting the assumption that $P$ has height at most one. Thus every element of $B$ is maximal. It follows that $B$ is an antichain. Moreover, since every element of $B$ is maximal, $B$ is a filter of $P$. Indeed, if $b \in B$ and $b \leq_Py$, then the maximality of $b$ implies $y=b\in B$.

    Let $a \in A$. Since $B$ is an antichain, every element of $U_B(a)=\{b \in B \colon a \leq_Pb\}$ is minimal in $U_B(a)$. Therefore $M(a)=U_B(a)$. We verify the separation condition. If $m \neq m'$ belong to $M(a)$, then, since $B$ is an antichain, 
    \[[m, \bullet]_B =[\bullet, m]_B=\{m\},\] and similarly
    \[[m', \bullet]_B=[\bullet,m']_B=\{m'\}.\] Hence \[[m, \bullet]_B \cap [m', \bullet]_B= \emptyset\] and \[[\bullet, m]_B \cap [\bullet, m']_B=\emptyset.\] The compatibility condition is also automatic. Indeed, since $A$ is an antichain, if $a \leq_Aa'$, then $a=a'$. Thus, for every $m \in M(a)$, we may take $m'=m$.

    Therefore $P=A \sqcup B$ is an admissible cut.

    It remains to identify the associated negative poset. Let $a \in A$ and $b \in B$. By definition of the negative order,
    \[b \leq_-a \iff \text{ there exists } m \in M(a) \text{ such that } b \leq_Bm.\]
    Since $B$ is an antichain, the relation $b \leq_Bm$ is equivalent to $b=m$. Consequently, 
    \[b \leq_-a \iff b \in M(a).\]
    Since $M(a)=U_B(a)$, this is equivalent to $a \leq_Pb.$
    Hence \[b \leq_-a \iff a \leq_Pb.\]
    The induced orders on $A$ and $B$ are trivial, since both subsets are antichains. 
    Thus every nontrivial relation of $P$ is reversed in $P^-$, and therefore $P^-=P^{\operatorname{op}}$. 
     The universal derived equivalence between $P$ and $P^{\operatorname{op}}$ now follows from Corollary \ref{The associated universally derived equivalent poset}.
\end{proof}

\begin{example} \label{example two level posets}
Let $P$ be the poset of height at most one displayed on the left below; its opposite $P^{\operatorname{op}}$ is displayed on the right.
\[\begin{tikzcd}[sep=small]
	&&&& {\bullet b_1} &&&&&&&&& {\bullet b_1} && \\
	\\
	{\bullet b_3} && {\bullet a_1} &&&& {\bullet a_2} &&& {\bullet b_3} && {\bullet a_1} &&&& {\bullet a_2} \\
	\\
	&&&& {\bullet b_2} &&&&&&&&& {\bullet b_2}
	\arrow[from=1-14, to=3-12]
	\arrow[from=1-14, to=3-16]
	\arrow[from=3-3, to=1-5]
	\arrow[from=3-3, to=3-1]
	\arrow[from=3-3, to=5-5]
	\arrow[from=3-7, to=1-5]
	\arrow[from=3-7, to=5-5]
	\arrow[from=3-10, to=3-12]
	\arrow[from=5-14, to=3-12]
	\arrow[from=5-14, to=3-16]
\end{tikzcd}\]
The poset $P$ has two minimal elements $a_1,a_2$, and three maximal elements $b_1,b_2,b_3$. Notice that its underlying undirected Hasse graph contains a cycle. 

Take \[A=\{a_1,a_2\}, \,\,\,\, B=\{b_1,b_2,b_3\},\] then 
\[M(a_1)=\{b_1,b_2,b_3\}, \,\,\,\, M(a_2)=\{b_1,b_2\}.\] Thus this example uses the non-singleton case of the construction. 
By Corollary \ref{Two-level posets and opposites}, the associated negative poset is $P^{\operatorname{op}}$, obtained by reversing all the covering relations displayed above. Hence $P$ and $P^{\operatorname{op}}$ are universally derived equivalent. They are not isomorphic, since $P$ has two minimal and three maximal elements, whereas $P^{\operatorname{op}}$ has three minimal and two maximal elements.
Thus this example yields a pair of non-isomorphic universally derived equivalent posets whose common underlying undirected Hasse graph contains a cycle.
\end{example}

\subsection{The singleton case and cardinality constraints}
  
\begin{corollary} 
\label{Reconstructing a poset from a cut}
Let $P$ be a finite poset, and let $P=A \sqcup B$ be a partition of the underlying set of $P$.
The following are equivalent: \begin{enumerate}
    \item There exists an order-preserving map $f \colon A \to B$ such that $P=(A \sqcup B, \leq_f^+)$;
    \item $A$ is an ideal of $P$, $B$ is a filter of $P$, and for every $a \in A$, the set $U_B(a)$ has a minimum.
\end{enumerate}

\end{corollary}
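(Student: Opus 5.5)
The plan is to reduce the statement to Theorem \ref{generalizzazione teomio}, via the observation that Ladkani's singleton construction with $f$ is his general construction with $B_a=\{f(a)\}$. By the discussion preceding Proposition \ref{Ladkani singleton}, an order-preserving $f\colon A\to B$ gives the family $\mathcal{B}=(\{f(a)\})_{a\in A}$. This family satisfies conditions (i) and (ii) of Section \ref{Ladkani's construction}, and $(A\sqcup B,\leq_f^+)=(A\sqcup B)^+_{\mathcal{B}}$. Conversely, any admissible family of singletons defines such an $f$: condition (ii) with $\varphi_{aa'}(f(a))=f(a')$ forces $f(a)\leq_B f(a')$ whenever $a\leq a'$.

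For $(i)\Rightarrow(ii)$, assume $P=(A\sqcup B,\leq_f^+)$. Theorem \ref{generalizzazione teomio} then says that $P=A\sqcup B$ is an admissible cut, so $A$ is an ideal and $B$ is a filter. The uniqueness part of that theorem gives $M(a)=\{f(a)\}$. Since $B$ is finite, every element of $U_B(a)$ lies above some element of $M(a)$. Hence $f(a)\leq_B b$ for all $b\in U_B(a)$, so $f(a)$ is the minimum of $U_B(a)$. One can also see this directly: $U_B(a)=\{b\colon f(a)\leq b\}=[f(a),\bullet]_B$.

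For $(ii)\Rightarrow(i)$, let $f(a)$ be the minimum of $U_B(a)$, so that $M(a)=\{f(a)\}$. I will check the three conditions of Definition \ref{def admissible cuts}. Condition (a) is assumed. Condition (b) is vacuous because each $M(a)$ is a singleton. For condition (c), take $a\leq_A a'$. Then $f(a')\in U_B(a')\subseteq U_B(a)$ by transitivity, so $f(a)\leq_B f(a')$; this gives (c) and also shows that $f$ is order-preserving. Theorem \ref{generalizzazione teomio} then yields $P=(A\sqcup B)^+_{\mathcal{M}}$, and with singleton sets this is exactly $(A\sqcup B,\leq_f^+)$.

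No step is a real obstacle. The only point needing care is the identification $(A\sqcup B)^+_{\{f(a)\}}=(A\sqcup B,\leq_f^+)$, including the fact that there are no mixed relations $b\leq a$ in either poset. Both follow immediately from the definitions.
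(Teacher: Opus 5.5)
Your proposal is correct and follows the paper's proof. The direction (ii)$\Rightarrow$(i) is identical: $M(a)=\{f(a)\}$, separation is vacuous, and compatibility together with monotonicity of $f$ comes from $f(a')\in U_B(a)$, after which Theorem \ref{generalizzazione teomio} applies. For (i)$\Rightarrow$(ii), the paper reads off directly from the definition of $\leq_f^+$ both the ideal/filter property and $U_B(a)=[f(a),\bullet]_B$, which is your ``direct'' remark, so your detour through the admissible-cut and uniqueness parts of the theorem is valid but unnecessary.
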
 
\begin{proof}
Assume $(ii)$. Then $M(a)$ is a singleton for every $a \in A$, say $M(a)=\{f(a)\}$. The separation condition is automatic. If $a \leq_Aa'$, then $f(a') \in U_B(a)$. Since $f(a)$ is the minimum of $U_B(a)$, we obtain $f(a) \leq_Bf(a')$. Thus $f$ is order-preserving and the compatibility condition holds. Theorem \ref{generalizzazione teomio} therefore gives $P=(A \sqcup B, \leq_f^+)$.

Conversely, assume $(i)$. Then, by the definition of $\leq_f^+$, no element of $B$ lies below an element of $A$, so $A$ is an ideal and $B$ is a filter. Moreover, for every $a \in A$, 
\[U_B(a)=\{b \in B \colon f(a) \leq_Bb\}=U_B(f(a)).\]
Hence $f(a)$ is the minimum of $U_B(a)$.

\end{proof}

We say that an admissible cut $P=A \sqcup B$ is of singleton type if $M(a)$ is a singleton for every $a \in A$.
\\[0.8 cm]
\begin{corollary} \label{cornuovo}
    Let $P=A \sqcup B$ be an admissible cut. Suppose that $P$ has a greatest element $\hat{1}$ and that $B \neq \emptyset$. Then  $M(a)$ is a singleton for every $a \in A$. Consequently, the admissible cut is of singleton type.
\end{corollary}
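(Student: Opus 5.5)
The plan is to show first that $\hat{1}\in B$, and then that the separation condition (b) forces $M(a)$ to have at most one element.

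\textbf{Step 1: locating $\hat{1}$.} Since $B\neq\emptyset$, pick $b\in B$. Then $b\leq_P\hat{1}$, and $B$ is a filter by condition (a), so $\hat{1}\in B$.

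\textbf{Step 2: $M(a)$ is nonempty.} Fix $a\in A$. We have $a\leq_P\hat{1}$ and $\hat{1}\in B$, so $\hat{1}\in U_B(a)$. Thus $U_B(a)$ is a nonempty finite subset of $B$ and has at least one minimal element, i.e.\ $M(a)\neq\emptyset$.

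\textbf{Step 3: $M(a)$ has at most one element.} Suppose $m\neq m'$ both lie in $M(a)$. Since $\hat{1}$ is the greatest element of $P$ and lies in $B$, it lies in $[m,\bullet]_B\cap[m',\bullet]_B$. This contradicts the first half of the separation condition (b). Hence $M(a)$ is a singleton for every $a\in A$, and by definition the cut is of singleton type.

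There is no real obstacle here. The only point needing care is Step 1: the hypothesis $B\neq\emptyset$ is exactly what ensures $\hat{1}\in B$. Without it, $A=P$ would make every $M(a)$ empty and the conclusion would fail.

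As an optional remark, Corollary \ref{Reconstructing a poset from a cut} then applies: $f(a)$ is the unique element of $M(a)$, giving $P=(A\sqcup B,\leq_f^+)$.
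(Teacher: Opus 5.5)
Your proof is correct and follows the paper's argument essentially step for step. Since $B$ is a nonempty filter, $\hat{1}\in B$, so $\hat{1}\in U_B(a)$ and $M(a)\neq\emptyset$. For distinct $m,m'\in M(a)$, the element $\hat{1}$ would lie in $[m,\bullet]_B\cap[m',\bullet]_B$, which contradicts the separation condition.
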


\begin{proof}
Since $B \neq \emptyset$ and $B$ is a filter of $P$, the greatest element $\hat{1}$ belongs to $B$. Fix $a \in A$. Then $a \leq_P \hat{1}$, and hence $\hat{1}\in U_B(a)$. Thus $U_B(a) \neq \emptyset$, so $M(a) \neq \emptyset$. 

    Suppose that $m, m' \in M(a)$. Since $ \hat{1}$ is the greatest element of $P$, we have \[m \leq_B \hat{1} \,\,\, \text{ and }\,\,\, m' \leq_B \hat{1}.\] Therefore \[\hat{1} \in [m, \bullet]_B \cap [m', \bullet]_B.\] By the separation condition of an admissible cut, this is impossible if $m \neq m'$. Hence $m=m'$, and therefore $M(a)$ is a singleton. 

    Since this holds for every $a \in A$, the admissible cut is of singleton type.
\end{proof}

\begin{remark}
    The same argument shows that $|M(a)| \leq 1$ whenever every two elements of $B$ have a common upper bound. Dually, the same conclusion holds when every two elements of $B$ have a common lower bound. In particular, this applies when $B$ is a join-semilattice or a meet-semilattice\footnote{Recall that, for two elements $x,y$ in a poset, their join $x\lor y$, when it exists, is their least upper bound, while their meet $x \land y$, when it exists, is their greatest lower bound. A poset in which every pair of elements admits a join (respectively, a meet) is called a join-semilattice (respectively, a meet-semilattice). For more details, see Garrett Birkhoff \cite[Chapter 1, page 9]{Birkhoff}.}.
\end{remark}

\begin{proposition}
\label{Constant cardinality on connected components}
Let $P=A \sqcup B$ be an admissible cut. If $a, a' \in A$ belong to the same connected component of the underlying undirected Hasse graph of $A$, then \[|M(a)|=|M(a')|.\]
In particular, on each connected component $C$ of $A$, there exists an integer $r_C \geq0$ such that 
\[|M(a)|=r_C \,\,\, \text{ for every } a \in C.\]
Consequently, if $A$ is connected and $M(a_0)$ is a singleton for some $a_0 \in A$, then $M(a)$ is a singleton for every $a \in A$, and the admissible cut is of singleton type.
\end{proposition}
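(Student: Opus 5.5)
The plan is to show that $|M(a)|$ is invariant along every edge of the underlying undirected Hasse graph of $A$. Since a connected component is connected by paths of such edges, equality of cardinalities then propagates along paths.

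First, by Theorem \ref{generalizzazione teomio}, an admissible cut yields a family $(B_a)_{a\in A}$ satisfying Ladkani's hypotheses, with $B_a=M(a)$. Concretely, Step 2 of the implication (ii)$\Rightarrow$(i) in that proof constructs, for every $a\leq_A a'$, a bijection $\varphi_{a,a'}\colon M(a)\to M(a')$. This construction uses only the separation condition (b), the compatibility condition (c), and finiteness. Hence $a\leq_A a'$ implies $|M(a)|=|M(a')|$. In particular this holds for every covering relation $a\lessdot a'$ in $A$, that is, for every edge of the Hasse graph of $A$. Since the Hasse diagram of $A$ is taken with respect to $\leq_A$, its edges are comparabilities in $A$, so no issue arises from comparing with covers in $P$.

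Next, suppose $a,a'$ lie in the same connected component. Choose a path $a=a_0,a_1,\dots,a_k=a'$ in the undirected Hasse graph. Consecutive elements are comparable in $A$, in one direction or the other, and the bijection applies in either direction. Induction on $k$ then gives $|M(a_0)|=|M(a_k)|$. Defining $r_C$ as the common value of $|M(a)|$ on a component $C$ gives the second assertion; note that $r_C=0$ is allowed, when $U_B(a)=\emptyset$. For the final assertion, if $A$ is connected there is only one component. Then $|M(a_0)|=1$ forces $|M(a)|=1$ for all $a\in A$, which is exactly the definition of singleton type.

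There is no real obstacle here. The only point needing care is that the bijection argument in Theorem \ref{generalizzazione teomio} does not depend on $M(a)$ being nonempty or on any property beyond (b) and (c). I would briefly recheck this when citing it. In the empty case, surjectivity holds trivially, since $M(a')\subseteq U_B(a)$ forces $M(a')=\emptyset$ whenever $M(a)=\emptyset$.
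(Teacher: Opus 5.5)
Your proposal is correct and follows essentially the same argument as the paper: the bijections $\varphi_{a,a'}\colon M(a)\to M(a')$ for comparable $a\leq_A a'$ come from Theorem \ref{generalizzazione teomio}, and equality of cardinalities then propagates along paths in the Hasse graph of $A$. Your extra check of the case $M(a)=\emptyset$ is fine but not needed, since the paper's surjectivity argument already covers it.
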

\begin{proof}
By Theorem \ref{generalizzazione teomio}, the sets $M(a)$ and $M(a')$ are in bijection whenever $a,a' \in A$ are comparable. Since adjacent vertices in the Hasse diagram of $A$ are comparable, the function $a \mapsto|M(a)|$ is constant on each connected component of $A$. The remaining assertions follow immediately.  
\end{proof}

\begin{remark}
    The connectedness assumption in Proposition \ref{Constant cardinality on connected components} cannot be omitted. Indeed, in Example \ref{example two level posets} the poset $A=\{a_1,a_2\}$ has two connected components, while $|M(a_1)|=3$ and $|M(a_2)|=2$. Thus the cardinality of $M(a)$ need not be constant on different connected components of $A$.
\end{remark}

\subsection{Invariance under isomorphisms}

The finite procedure described after Theorem \ref{generalizzazione teomio} may produce isomorphic outputs from different choices of data. The following result identifies a basic source of this redundancy: replacing the two input posets by isomorphic copies and transporting the construction data along the chosen isomorphisms does not change the isomorphism classes of the resulting posets. In particular, when the input posets are fixed, families lying in the same orbit under the natural action of \[\operatorname{Aut}(A) \times \operatorname{Aut}(B)\] produce isomorphic positive and negative posets. Thus, such families need not be considered separately when classifying the outcomes of the construction. 

Let $A$ and $B$ be finite posets, and let $\mathcal{Y}=(Y_a)_{a\in A}$ be a family of subsets of $B$ satisfying the hypotheses of Section \ref{Ladkani's construction}. We denote the associated positive and negative posets by \[(A \sqcup B)^+_{\mathcal{Y}} \,\,\, \text{ and }\,\,\, (A \sqcup B)^-_{\mathcal{Y}}.\] 
When it is necessary to distinguish the corresponding orders, we write $\leq^+_{\mathcal{Y}}$ and $\leq^-_{\mathcal{Y}}$.

\begin{proposition} \label{invgen}
    Let $A, B, A'$ and $B'$ be finite posets, and let $\mathcal{Y}=(Y_a)_{a \in A}$ be a family of subsets of $B$ satisfying the hypotheses of the construction in Section \ref{Ladkani's construction}. Let
    \[\alpha \colon A \to A', \,\,\,\,\,\,\, \beta \colon  B \to B'\] be isomorphisms of posets. 
    For every $a' \in A'$, define $Y'_{a'} \coloneq \beta (Y_{\alpha^{-1}(a')})$, and set $\mathcal{Y'}=(Y'_{a'})_{a' \in A'}$. 
    Then $\mathcal{Y}'$ satisfies the same hypotheses. Moreover,
    \[(A \sqcup B ) _{\mathcal{Y}}^+\cong (A' \sqcup B' )_{\mathcal{Y}'}^+\] and \[(A \sqcup B) _{\mathcal{Y}}^- \cong (A' \sqcup B')_{\mathcal{Y}'}^-.\]
\end{proposition}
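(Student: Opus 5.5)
The plan is to transport everything along the single map $\gamma \coloneq \alpha \sqcup \beta \colon A \sqcup B \to A' \sqcup B'$, which is a bijection of underlying sets, and check that it respects the hypotheses and the two orders.

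\textbf{Hypotheses for $\mathcal{Y}'$.} For condition (i), fix $a' \in A'$ and write $a=\alpha^{-1}(a')$. Distinct elements of $Y'_{a'}$ have the form $\beta(y)\neq\beta(z)$ with $y\neq z$ in $Y_a$. Since $\beta$ is an order isomorphism, we have $[\beta(y),\bullet]=\beta([y,\bullet])$ and $[\bullet,\beta(y)]=\beta([\bullet,y])$. Because $\beta$ is injective, it preserves empty intersections, so condition (i) transfers. For condition (ii), let $a'_1\le a'_2$ in $A'$. Then $a_1=\alpha^{-1}(a'_1)\le a_2=\alpha^{-1}(a'_2)$, since $\alpha^{-1}$ is order-preserving. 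Define
\[\varphi'_{a'_1a'_2}\coloneq \beta\circ\varphi_{a_1a_2}\circ\beta^{-1}|_{Y'_{a'_1}}\colon Y'_{a'_1}\to Y'_{a'_2}.\]
This is a bijection, being a composite of bijections. For $y'=\beta(y)$ we get $y'=\beta(y)\le\beta(\varphi_{a_1a_2}(y))=\varphi'_{a'_1a'_2}(y')$, as required.

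\textbf{$\gamma$ is an isomorphism for both orders.} On $A$ and on $B$ this holds because $\alpha$ and $\beta$ are isomorphisms, and both orders restrict to the given ones. For mixed relations in the positive order, take $a\in A$ and $b\in B$:
\[a\le^+_{\mathcal{Y}}b\iff \exists\,y\in Y_a,\ y\le b\iff \exists\,y\in Y_a,\ \beta(y)\le\beta(b)\iff \exists\,y'\in Y'_{\alpha(a)},\ y'\le \beta(b)\iff \alpha(a)\le^+_{\mathcal{Y}'}\beta(b),\]
using $Y'_{\alpha(a)}=\beta(Y_a)$. Mixed relations in the other direction do not occur on either side, and $\gamma$ maps $A$ to $A'$ and $B$ to $B'$. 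Hence $\gamma$ and $\gamma^{-1}$ are both order-preserving. The negative case is the same computation with $b\le y$ replacing $y\le b$.

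No step is a genuine obstacle. The point needing the most care is the bookkeeping: checking that $\varphi'$ is well defined with codomain $Y'_{a'_2}=\beta(Y_{a_2})$, and that $\gamma$ respects the partition, so that no mixed relation in the "wrong" direction can arise.
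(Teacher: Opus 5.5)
Your proposal is correct and follows essentially the same route as the paper. Both transfer conditions (i) and (ii) along $\beta$, using $\varphi'=\beta\circ\varphi\circ\beta^{-1}$, and then check that $\gamma=\alpha\sqcup\beta$ matches the mixed relations of both orders via $Y'_{\alpha(a)}=\beta(Y_a)$, with your explicit remark that $\gamma$ respects the partition (so no wrong-direction mixed relations arise) being a point the paper leaves implicit.
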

\begin{proof}
    We first prove that $\mathcal{Y}'$ satisfies the same hypotheses.
    
    Let $a' \in A'$, and let $y'_1,y'_2 \in \mathcal{Y}'_{a'}$ be distinct. Set $a=\alpha^{-1}(a')$. By the definition of $Y'_{a'}$, there exist distinct elements $y_1,y_2 \in Y_a$ such that \[y'_1=\beta(y_1), \,\,\,\,\,\, y_2'=\beta(y_2).\]
    Since $\beta \colon B \to B'$ is an isomorphism of posets, it maps upper and lower cones in $B$ onto the corresponding cones in $B'$: 
    \[[\beta(y), \bullet]_{B'}=\beta([y,\bullet]_B), \,\,\,\,\,\, [\bullet, \beta(y)]_{B'}=\beta([\bullet,y]_B).\]
    Since $\mathcal{Y}$ satisfies condition (i),
    \[[y_1,\bullet]_B \cap [y_2,\bullet]_B= \emptyset \,\,\, \text{ and } \,\,\, [\bullet, y_1]_B \cap [\bullet, y_2]_B= \emptyset.\]
    Applying $\beta$, we obtain 
    \[[y'_1,\bullet]_{B'} \cap [y'_2,\bullet]_{B'}= \emptyset \,\,\, \text{ and } \,\,\, [\bullet, y'_1]_{B'} \cap [\bullet, y'_2]_{B'}= \emptyset.\]
    Thus condition (i) holds for $\mathcal{Y}'.$
    We now verify condition (ii). Let $a',a'' \in A'$ with $a' \leq_{A'} a''$. Since $\alpha \colon A \to A'$ is an isomorphism,
    \[\alpha^{-1}(a') \leq_A \alpha^{-1}(a'').\] Hence there exists a bijection 
    \[\varphi_{\alpha^{-1}(a'),\alpha^{-1}(a'')} \colon Y_{\alpha^{-1}(a')} \to Y_{\alpha^{-1}(a'')}\] such that 
    \[y \leq_B \varphi_{\alpha^{-1}(a'),\alpha^{-1}(a'')}(y)\] for every $y \in Y_{\alpha^{-1}(a')}$. Define
    \[\varphi'_{a',a''} \coloneq \beta \circ \varphi_{\alpha^{-1}(a'), \alpha^{-1}(a'')} \circ \beta^{-1}.\]
    Then $\varphi_{a',a''}' \colon Y'_{a'} \to Y'_{a''}$ is a bijection.
    
    If $z' \in Y_{a'}'$, then $z'=\beta(z)$ for some $z \in Y_{\alpha^{-1}(a')}$. Therefore,
    \[z \leq_B\varphi_{\alpha^{-1}(a'),\alpha^{-1}(a'')}(z).\] Since $\beta$ is order-preserving, $z' \leq_{B'} \varphi'_{a',a''}(z')$.
    Thus condition (ii) also holds for $\mathcal{Y}'$.

    It remains to prove the two isomorphisms. Define \[\gamma \colon A \sqcup B \to A' \sqcup B'\] by \[\gamma(a)=\alpha(a) \,\, \text{ for } a \in A, \,\,\,\,\, \gamma(b)=\beta(b) \,\, \text{ for } b \in B.\]
    Since $\alpha$ and $\beta$ are isomorphisms, $\gamma$ is a bijection and preserves the orders inside $A$ and $B$. 

    We first consider the positive mixed relations. Let $a \in A$ and $b \in B$. Then $a \leq^+_{\mathcal{Y}}b$ if and only if there exists $y \in Y_a$ such that $y \leq_B b$.

    Since $\beta \colon B \to B'$ is an isomorphism, this is equivalent to the existence of $y' \in \beta( Y_a)$ such that $y' \leq_{B'} \beta(b)$.

    Moreover, $\beta(Y_a)=Y'_{\alpha(a)}$, because \[Y'_{\alpha(a)}=\beta(Y_{\alpha^{-1}(\alpha(a))})=\beta(Y_a).\]
    Hence \[a \leq_{\mathcal{Y}}^+b \iff \alpha(a) \leq^+_{\mathcal{Y}'} \beta(b),\]

    or equivalently \[a \leq^+_{\mathcal{Y}}b \iff \gamma(a) \leq^+_{\mathcal{Y}'} \gamma (b).\]
    Therefore $\gamma$ is an isomorphism 
    \[(A \sqcup B)^+_{\mathcal{Y}} \cong (A' \sqcup B')^+_{\mathcal{Y}'}.\] 
    Similarly, for the negative mixed relations, \[b \leq_{\mathcal{Y}}^-a \iff \text{ there exists } y \in Y_a \text{ such that } b \leq_B y \iff\] \[ \text{ there exists } y' \in Y'_{\alpha(a)} \text{ such that } \beta (b) \leq_{B'} y' \iff \beta (b) \leq^-_{\mathcal{
    Y'}} \alpha(a).\] 
    Thus \[b \leq^-_{\mathcal{
    Y}}a \iff \gamma (b) \leq ^- _{\mathcal{
    Y}'} \gamma (a),\] and consequently \[(A \sqcup B)^-_{\mathcal{Y}} \cong (A' \sqcup B')^-_{\mathcal{Y}'}.\]
\end{proof}

The singleton case gives the following immediate consequence.
\begin{corollary}
    Let $A$ and $B$ be finite posets, let $f \colon A \to B$ be an order-preserving map, and let $\alpha \in \operatorname{Aut}(A)$, $\beta \in \operatorname{Aut}(B)$. 

    Define $g=\beta \circ f \circ \alpha^{-1}$. Then
    \[(A \sqcup B, \leq_f^+)\cong (A \sqcup B, \leq_g^+)\] and 
    \[(A \sqcup B, \leq_f^-) \cong (A \sqcup B, \leq_g^-).\]
\end{corollary}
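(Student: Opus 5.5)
The plan is to deduce this directly from Proposition \ref{invgen}, taking $A'=A$, $B'=B$ and the given automorphisms $\alpha\in\operatorname{Aut}(A)$, $\beta\in\operatorname{Aut}(B)$. First, I will pass to the singleton family: the order-preserving map $f$ corresponds to the family $\mathcal{Y}=(Y_a)_{a\in A}$ with $Y_a=\{f(a)\}$. As explained before Proposition \ref{Ladkani singleton}, this family satisfies the hypotheses of Section \ref{Ladkani's construction}. Its positive and negative posets are exactly $(A\sqcup B,\leq_f^+)$ and $(A\sqcup B,\leq_f^-)$, because for $x\in A$ and $y\in B$ the mixed relations $x\leq_+y$ and $y\leq_-x$ reduce to $f(x)\leq y$ and $y\leq f(x)$.

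Next, I will compute the transported family $\mathcal{Y}'$ from Proposition \ref{invgen}. For $a'\in A$,
\[Y'_{a'}=\beta(Y_{\alpha^{-1}(a')})=\{\beta(f(\alpha^{-1}(a')))\}=\{g(a')\}.\]
So $\mathcal{Y}'$ is precisely the singleton family attached to $g=\beta\circ f\circ\alpha^{-1}$. This $g$ is order-preserving, being a composite of order-preserving maps, which is consistent with the proposition's assertion that $\mathcal{Y}'$ satisfies the hypotheses.

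Finally, Proposition \ref{invgen} yields $(A\sqcup B)^\pm_{\mathcal{Y}}\cong(A\sqcup B)^\pm_{\mathcal{Y}'}$. Identifying both sides with the flip-flop posets of $f$ and $g$, as in the first step, gives the two claimed isomorphisms. The isomorphism is explicitly $\gamma=\alpha\sqcup\beta$.

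There is no real obstacle here. The only point needing care is the identification of the singleton-family positive and negative orders with $\leq_f^{\pm}$, and that identification is immediate from the definitions.
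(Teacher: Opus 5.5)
Your proposal is correct and matches the paper's proof: you apply Proposition \ref{invgen} with $A'=A$, $B'=B$ and the singleton family $Y_a=\{f(a)\}$, note that the transported family is $Y'_a=\{g(a)\}$, and read off both isomorphisms. Your explicit identification of the singleton-family orders with $\leq_f^{\pm}$, and the description of the isomorphism as $\gamma=\alpha\sqcup\beta$, spell out what the paper leaves implicit.
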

\begin{proof}
    Apply Proposition \ref{invgen} with $A'=A, B'=B$, and with the singleton family $Y_a=\{f(a)\}$. Then
    \[Y'_a=\beta(Y_{\alpha^{-1}(a)})=\beta(\{f(\alpha^{-1}(a))\})=\{g(a)\}.\] The transformed family is the singleton family associated with $g$, and the isomorphisms follow from Proposition \ref{invgen}.
\end{proof}

Thus, in the singleton case, order-preserving maps which lie in the same orbit under the natural action of $\operatorname{Aut} (A)\times \operatorname{Aut}(B)$ give rise to isomorphic positive and negative posets. More generally, Proposition \ref{invgen} shows that the same invariance holds for arbitrary families satisfying the hypotheses of Section \ref{Ladkani's construction}.

\begin{remark}
    Proposition \ref{invgen} includes the case of an automorphism of the whole poset. Indeed, let $P=(A \sqcup B)_{\mathcal{Y}}^+$ and let $\sigma \in \operatorname{Aut}(P)$. Set $A'=\sigma (A)$, $B'=\sigma(B)$, and take $\alpha=\sigma \mid_A \colon A \to A'$ and $\beta=\sigma \mid_B \colon B \to B'$. Then the transformed family satisfies $Y'_{\sigma(a)}=\sigma(Y_a)$ for every $a \in A$, and Proposition \ref{invgen} gives the corresponding positive and negative isomorphisms.
\end{remark}

%% file: orientation_of_tree.tex
\section{Source-clicks and tree orientations}

In this section, we study source-to-sink transformations from the point of view of admissible cuts. We first consider an arbitrary finite poset $P$ and a minimal element $s$, and give a criterion for reversing the covering relations incident with $s$ to be realized by the construction of Theorem \ref{generalizzazione teomio}. Whenever this criterion is satisfied, the poset obtained by the source-click is universally derived equivalent to $P$.

Source-to-sink transformations are classical in the representation theory of quivers, where they arise in the Bernstein--Gelfand--Ponomarev reflection functors \cite{BGP}. Their relation with derived equivalences was further developed by Happel \cite{Happel}. 

We then specialize to orientations of finite trees. In this case, the criterion is automatically satisfied at every source, so each source-click is realized by an admissible cut. We use the classical fact that any orientation of a finite tree can be reached from any other by a finite sequence of such clicks, and include a self-contained proof for completeness. This recovers Ladkani's result \cite[Corollary 1.8]{ladkani} that all orientations of a finite tree are universally derived equivalent. Related orientation-independence results have also been obtained in the stronger framework of strong stable equivalence; see Groth and Šťovíček \cite[Corollary 9.23]{Groth}.

Finally, we discuss why the analogous transitivity statement fails in general for graphs with cycles.

\subsection{An intrinsic criterion for source-clicks} \label{An intrinsic criterion for source-clicks}

Let $P$ be a finite poset and let $s$ be a minimal element of $P$. We denote by $\operatorname{Cov}_P(s)$ the set of elements covering $s$, that is, \[\operatorname{Cov}_P(s) \coloneq \{m \in P \colon s <_Pm \text{ and there is no } x \in P \text{ with } s<_Px<_Pm\}.\]

Since $s$ is a source in the oriented Hasse diagram of $P$, reversing all the covering relations incident with $s$ does not create an oriented cycle: after the reversal, $s$ becomes a sink. We denote by $\operatorname{cl}_s(P)$ the poset obtained by reversing every covering relation \[s <_Pm, \,\,\,\, m \in \operatorname{{Cov}_P(s)},\]
and taking the transitive closure. We call this operation a source-click at $s$.

The source-to-sink equivalence is the case $X=\{*\}$ of Ladkani's construction \cite[Corollary 1.7]{ladkani}. The point of the following proposition is to formulate this construction intrinsically for a given poset $P$: for a minimal element $s$, the relevant subset is necessarily $\operatorname{Cov}_P(s)$, and the admissible-cut condition gives a recognition criterion entirely in terms of the order structure of $P$.

\begin{proposition}
\label{General source-click criterion}
Let $P$ be a finite poset, let $s$ be a minimal element of $P$, and set
\[A=\{s\}, \,\,\,\, B=P \setminus\{s\}.\] Then \[M(s)=\operatorname{Cov}_P(s).\]
Moreover, the following conditions are equivalent:
\begin{enumerate}
    \item the partition $P=A \sqcup B$ is an admissible cut;
    \item for every pair of distinct elements $m,m' \in \operatorname{Cov_P(s)},$
    \[[m,\bullet]_B \cap [m', \bullet]_B= \emptyset\] and \[[\bullet, m]_B \cap [\bullet, m']_B = \emptyset.\]
\end{enumerate}
    Whenever these equivalent conditions hold, the associated negative poset coincides with the source-click of $P$ at $s$:
    \[P^-=\operatorname{cl}_s(P).\]
    In particular, $P$ and $\operatorname{cl}_s(P)$ are universally derived equivalent.
    
\end{proposition}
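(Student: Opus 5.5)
Since $s$ is minimal, $A=\{s\}$ is an ideal: any $x\le_P s$ equals $s$. Its complement $B=P\setminus\{s\}$ is therefore a filter, because if $b\in B$ and $b\le_P x$ then $x\neq s$ (otherwise $b\le s$ would force $b=s$). So condition (a) of Definition \ref{def admissible cuts} always holds.

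Next I compute $M(s)$. We have $U_B(s)=\{b\in B: s<_P b\}$. An element $m\in U_B(s)$ is minimal there exactly when no $x\in B$ satisfies $s<_Px<_Pm$. Since every $x$ strictly above $s$ lies in $B$, this is the same as saying $m$ covers $s$. Hence $M(s)=\operatorname{Cov}_P(s)$.

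For the equivalence, condition (b) of Definition \ref{def admissible cuts} with $A=\{s\}$ is literally statement (ii), by the formula just obtained. Condition (c) is automatic: the only relation in $A$ is $s\le s$, and for each $m\in M(s)$ we can take $m'=m$. Thus (i)$\iff$(ii).

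It remains to identify $P^-$ with $\operatorname{cl}_s(P)$, which I expect to be the only step needing care. By Corollary \ref{The associated universally derived equivalent poset}, $P^-$ keeps the order on $B$, and its only mixed relations are $b\le_- s$ iff $b\le_B m$ for some $m\in\operatorname{Cov}_P(s)$.

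On the other side, $\operatorname{cl}_s(P)$ is the transitive closure of the reversed Hasse diagram. I will show its relations are the same, in two steps.

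First, among elements of $B$: every Hasse edge of $P$ not incident with $s$ joins two elements of $B$, and $s$ is a sink after reversal. So no directed path through $s$ connects two elements of $B$, and relations inside $B$ come only from Hasse edges of $P$ within $B$. These edges generate the induced order $\le_B$, because any chain $b<_P\dots<_P b'$ starting in $B$ stays in $B$, as $B$ is a filter.

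Second, for mixed relations: $s$ has no outgoing edges, so the only ones have the form $b\le s$. Such a relation holds iff there is a path $b\to\dots\to m\to s$ with $m\in\operatorname{Cov}_P(s)$, i.e. iff $b\le_B m$ for some such $m$. This matches $\le_-$ exactly.

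Hence $P^-=\operatorname{cl}_s(P)$. Universal derived equivalence then follows from Corollary \ref{The associated universally derived equivalent poset}.
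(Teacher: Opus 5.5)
Your proposal is correct and follows essentially the same route as the paper. Both arguments derive the ideal--filter condition from the minimality of $s$, identify $M(s)=\operatorname{Cov}_P(s)$, and observe that (b) is literally (ii) while (c) is trivial. Both then match $P^-$ with $\operatorname{cl}_s(P)$ by comparing the orders on $B$ (using that $B$ is a filter and $s$ becomes a sink) and the mixed relations $b\le s$. Your handling of condition (c) via $m'=m$ is slightly more precise than the paper's remark that it is vacuous.
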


\begin{proof}
   Since $s$ is minimal, $A=\{s\}$ is an ideal of $P$, and therefore $B=P \setminus\{s\}$ is a filter.

   We first identify $M(s)$. By definition, \[M(s)=\operatorname{Min}_B\{b \in B \colon s \leq_Pb\}.\] An element $b \in B$ is minimal among the elements above $s$ if and only if there is no element $x \in P$ such that $s<_Px<_Pb$. This is equivalent to saying that $b$ covers $s$. Hence \[M(s)= \operatorname{Cov}_P(s).\]
   The ideal-filter condition is therefore automatic. Moreover, the compatibility condition is vacuous, since $A$ consists of a single element. Consequently, the partition $P=A \sqcup B$ is admissible if and only if the separation condition holds for the distinct elements of \[M(s)=\operatorname{Cov}_P(s),\] which proves the equivalence between $(i)$ and $(ii)$. 

   Assume now that these equivalent conditions hold. By Corollary \ref{The associated universally derived equivalent poset}, the mixed relations in the associated negative poset are
   \[b\leq_-s \iff \text{ there exists } m \in \operatorname{Cov}_P(s) \text{ such that } b \leq_B m.\]
   On the other hand, in the source-click $\operatorname{cl}_s(P)$, every edge $s<_Pm$ is replaced by $m<s$. Thus, for $b \in B$, there is an increasing path from $b$ to $s$ in $\operatorname{cl}_s(P)$ if and only if there exists $m \in \operatorname{Cov}_P(s)$ such that $b \leq_Bm$. Therefore 
   \[b \leq_{\operatorname{cl}_s(P)}s \iff b \leq_-s.\] The orders induced on $B$ are unchanged in both posets. Indeed, since $B$ is a filter of $P$, every chain in $P$ between elements of $B$ lies entirely in $B$, while after the source-click $s$ is a sink and therefore cannot create new relations between elements of $B$. Moreover, neither poset has a mixed relation of the form $s \leq b$. Hence $P^-= \operatorname{cl}_s(P)$. The universal derived equivalence follows from Corollary \ref{The associated universally derived equivalent poset}.

\end{proof}

\subsection{Tree orientations}

\textbf{Setup and notation.} Let $T$ be a finite tree with vertex set $V$ and edge set $E$. An orientation of $T$ is a partial order $P$ on $V$ whose covering relation, viewed as an undirected graph, equals $T$. Equivalently, an orientation is obtained by directing each edge of $T$ and taking the transitive closure. Since $T$ is acyclic, this always gives a partial order. We denote by $\operatorname{Or}(T)$ the set of orientations of $T$. A vertex $s$ is called a source of $P$ if it is minimal in $P$, i.e., for every edge joining $s$ to a vertex $v$, it holds that $s <_Pv$. Dually, $s$ is a sink if it is maximal in $P$. 

    Notice that, given $P \in \operatorname{Or(T)}$ and $x \neq y \in V$, $x \leq_Py$ if and only if the order increases at every step along the unique path from $x$ to $y$. 

For $P \in \operatorname{Or(T)}$ and a source $s$ of $P$, the source-click $cl_s(P)$ defined in Section \ref{An intrinsic criterion for source-clicks} amounts to reversing all edges incident to $s$; we also refer to this operation as a click at $s$.

\begin{corollary}
\label{Source step}
Let $T$ be a finite tree, let $P \in \operatorname{Or(T)}$ with $|V| \geq 2$ and let $s$ be a source of $P$. Set $A=\{s\}$ and $B=V \setminus \{s\}$. Then $P=A \sqcup B$ is an admissible cut. Moreover, $M(s)=N(s)$, where $N(s)$ is the set of neighbours of $s$ in $T$, and the associated negative poset is the source-click of $P$ at $s$: \[P^-=\operatorname{cl}_s(P).\] In particular, $P$ and $cl_s(P)$ are universally derived equivalent.   
\end{corollary}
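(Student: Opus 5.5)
The plan is to reduce everything to Proposition \ref{General source-click criterion}. Since $s$ is a source, it is minimal in $P$, so that proposition applies with $A=\{s\}$ and $B=V\setminus\{s\}$. It gives $M(s)=\operatorname{Cov}_P(s)$, and it tells us that admissibility is equivalent to the separation condition on distinct elements of $\operatorname{Cov}_P(s)$. Once separation is verified, the same proposition yields both $P^-=\operatorname{cl}_s(P)$ and the universal derived equivalence. So two things remain: identifying $\operatorname{Cov}_P(s)$ with $N(s)$, and verifying separation.

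\textbf{Step 1: $\operatorname{Cov}_P(s)=N(s)$.} The covering relation of an orientation is, by definition, the edge set of $T$. Every edge at $s$ is oriented away from $s$, since $s$ is a source. Hence the elements covering $s$ are exactly the neighbours of $s$. The hypothesis $|V|\geq 2$ together with connectedness only ensures that $N(s)\neq\emptyset$; it is not needed for the argument.

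\textbf{Step 2: separation.} Let $m\neq m'$ be neighbours of $s$. Suppose, for contradiction, that $b\in[m,\bullet]_B\cap[m',\bullet]_B$. I will use the remark that, in an orientation of a tree, $x\le_P y$ holds exactly when the unique path from $x$ to $y$ is increasing at every step.
\begin{itemize}
\item The increasing path from $m$ to $b$ avoids $s$, because $b\in B$ and the order increases along the path while $s$ is minimal. So it is a path in $T\setminus\{s\}$.
\item The same holds for the path from $m'$ to $b$.
\item Concatenating these two paths, and reducing to a simple path, joins $m$ to $m'$ inside $T\setminus\{s\}$.
\item Adding the edges $s$--$m$ and $s$--$m'$ then produces a cycle in $T$, which is a contradiction.
\end{itemize}
The lower cones are handled the same way: if $b\le_P m$ and $b\le_P m'$ with $b\in B$, the decreasing paths from $m$ and from $m'$ to $b$ again avoid $s$, and the same cycle argument applies. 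Equivalently, in the tree $T$, the vertices $m$ and $m'$ lie in different components of $T\setminus\{s\}$, so nothing in $B$ is comparable to both of them.

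\textbf{Main obstacle.} The only delicate point is justifying that the comparability paths lie in $B$. Put differently, comparability in $B$ of two elements implies that they lie in the same component of $T\setminus\{s\}$. This follows from the path characterisation of $\le_P$ together with the fact that $s$ is minimal, so it cannot appear in the interior of a monotone path. After that, Proposition \ref{General source-click criterion} concludes the proof.
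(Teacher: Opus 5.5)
Your proof is correct and follows essentially the same route as the paper. You reduce to Proposition \ref{General source-click criterion} and identify $\operatorname{Cov}_P(s)$ with $N(s)$. You then get separation from the fact that the distinct neighbours $m\neq m'$ lie in different components of $T\setminus\{s\}$, while every element of $B$ comparable to $m$ lies in the component of $m$. Your explicit argument that monotone paths cannot pass through the minimal element $s$ makes precise a step the paper only asserts.
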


\begin{proof}
    Since the covering relation of $P$, viewed as an undirected graph, is $T$, and $s$ is a source, the elements covering $s$ are exactly its neighbours in $T$. Hence
    \[\operatorname{Cov}_P(s)=N(s).\] 
    Let $m \neq m'$ be two neighbours of $s$. The vertices $m$ and $m'$ belong to distinct connected components of $T \setminus \{s\}$. 

    Any element of $B$ which is comparable with $m$ through an increasing path in the Hasse diagram belongs to the same component of $T \setminus \{s\}$ as $m$. The same holds for $m'$. Since the two components are disjoint, both the upper and the lower cones of $m$
 and $m'$ in $B$ are disjoint:
\[[m, \bullet]_B \cap [m', \bullet]_B= \emptyset,\]
\[[\bullet, m]_B \cap [\bullet,m']_B= \emptyset.\] The conclusion now follows from Proposition \ref{General source-click criterion}.
\end{proof}
For $P,Q \in \operatorname{Or}(T)$, we write $P \to Q$ if $Q$ is obtained from $P$ by clicking at a source; that is, if $Q=cl_s(P)$ for some source $s$ of $P$. We denote by $\to ^*$ the reflexive-transitive closure of this relation. Thus $P \to ^*Q$ means that there exists a finite sequence of orientations \[P=P_0 \to P_1 \to \dots \to P_r=Q,\] or equivalently that $Q$ can be reached from $P$ by finitely many source-clicks.
The following combinatorial fact is classical; see \cite{BGP}. We include a self-contained proof for completeness.

\begin{proposition}
\label{Transitivity of clicks on a tree}
    Let $T$ be a finite tree. For every $P,Q \in \operatorname{Or}(T)$, one has $P \to^*Q$.
\end{proposition}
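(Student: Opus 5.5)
We argue by induction on the number of vertices $|V|$. If $|V|=1$ there is a single orientation and there is nothing to prove. Let $|V|\ge 2$, and choose a leaf $\ell$ of $T$ with unique neighbour $u$. Set $T'=T\setminus\{\ell\}$, which is again a finite tree. Every orientation $P\in\operatorname{Or}(T)$ restricts to an orientation $P'\in\operatorname{Or}(T')$: restrict the orientation of each edge, equivalently take the induced order, since paths in $T'$ are paths in $T$. The plan has three steps. First, realize clicks of $T'$ inside $T$. Second, use them to move $P'$ to $Q'$. Third, fix the remaining edge $\{\ell,u\}$.

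\textbf{Step 1: lifting clicks.} Let $s$ be a source of $P'$, so that $P'\to\operatorname{cl}_s(P')$ in $\operatorname{Or}(T')$. We show that some short sequence of clicks in $\operatorname{Or}(T)$ yields an orientation whose restriction to $T'$ is $\operatorname{cl}_s(P')$.
\begin{itemize}
\item If $s\neq u$, then $s$ has the same neighbours in $T$ as in $T'$. Hence $s$ is a source of $P$, and $\operatorname{cl}_s(P)$ restricts to $\operatorname{cl}_s(P')$.
\item If $s=u$ and the edge is oriented $u<\ell$ in $P$, then $u$ is a source of $P$. Clicking at $u$ in $T$ reverses the edges to $T'$ as required; it also flips the edge to $\ell$, which is harmless.
\item If $s=u$ and $\ell<u$ in $P$, then $\ell$ is a source of $P$, being a leaf pointing away. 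We first click at $\ell$, which makes $u<\ell$ and changes nothing in $T'$. Then $u$ is a source, and we click at $u$.
\end{itemize}
In every case the restriction to $T'$ is $\operatorname{cl}_s(P')$. The status of the edge $\{u,\ell\}$ is uncontrolled, but this does not matter for the induction.

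\textbf{Step 2: reaching $Q'$.} By the induction hypothesis applied to $T'$, we have $P'\to^*Q'$. Iterating Step 1 along this chain, we reach some $R\in\operatorname{Or}(T)$ with $R'=Q'$.

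\textbf{Step 3: the last edge.} It remains to fix the orientation of $\{u,\ell\}$ in $R$. If it agrees with $Q$, then $R=Q$ and we are done. This uses that an orientation is determined by the directions of its edges, since the order is the transitive closure of the edge directions.

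This last correction is the expected main obstacle. Clicking at $\ell$ only works when $\ell<u$; otherwise a click at $u$ would also disturb $T'$. We handle it by a \emph{full round} trick.

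\emph{Claim.} For any orientation $S$ of a finite tree, there is a sequence of clicks, using each vertex exactly once, that returns to $S$.

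\emph{Proof of the claim.} Take a linear extension $v_1,\dots,v_n$ of $S$ and click $v_1,v_2,\dots,v_n$ in this order. Each $v_i$ is a source at its turn: its lower neighbours were clicked earlier and now point into $v_i$, while its upper neighbours are not yet clicked and still lie above it. Each edge is reversed exactly twice, once at each endpoint, so the round returns to $S$.

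\emph{Applying the claim.} Apply the claim to $R'$, but perform the round in $T$ via Step 1. The vertex $u$ is clicked once. Using the case analysis of Step 1, possibly preceded by a click at $\ell$, the edge $\{u,\ell\}$ is flipped a controllable number of times. It is flipped once by the click at $u$, and the parity can be adjusted by whether we insert the extra click at $\ell$ when it is a source. Concretely:
\begin{itemize}
\item If $R$ has the wrong direction with $\ell<u$, click $\ell$ directly.
\item If $R$ has $u<\ell$ but $Q$ has $\ell<u$, perform one round on $T'$ lifted to $T$. At the step where $u$ is clicked, $u$ is a source in $R$'s current state, because the edge to $\ell$ has not been touched. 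Clicking $u$ then gives $\ell<u$ and restores $T'$ by the end of the round.
\end{itemize}
This yields $Q$, and completes the induction.
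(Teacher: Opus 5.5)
Your proof is correct, and its skeleton is the paper's. Both arguments proceed by induction, deleting a leaf $\ell$ with neighbour $u$. Both lift clicks of $T'$ to $T$, clicking $\ell$ first when a click at $u$ is needed and $\ell<u$. Both handle the easy case ($\ell<u$ in $R$ but $u<\ell$ in $Q$) with a single click at $\ell$.

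The genuine difference is the hard case, $u<\ell$ in $R$ but $\ell<u$ in $Q$. This case needs a closed source-click walk on $T'$, based at $Q'$, that contains a click at $u$.
\begin{itemize}
\item The paper builds this walk by applying the induction hypothesis twice. It goes from $Q'$ to an orientation $R'$ in which $p$ (your $u$) is a source, clicks $p$, and returns from $\operatorname{cl}_p(R')$ to $Q'$. It then uses that after every lifted click at $p$ the leaf edge satisfies $\ell<p$, whatever clicks at $\ell$ were inserted along the way.
\item You write the walk down explicitly, as a full round along a linear extension of $Q'$. This does not use the induction hypothesis and has length $|V|-1$. It clicks $u$ exactly once, at a moment when the leaf edge is still $u<\ell$, so no click at $\ell$ is ever inserted.
\end{itemize}
Your route is more explicit and isolates a lemma of independent interest: a Coxeter-type round returns every orientation of a tree to itself. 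The paper's route needs no auxiliary lemma.

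There are two small wording slips.
\begin{itemize}
\item In the proof of the claim, after a lower neighbour $w$ of $v_i$ has been clicked, one has $v_i<w$. So $w$ now lies \emph{above} $v_i$, and with the paper's upward arrows the edge leaves $v_i$. This is what makes $v_i$ a source; ``now point into $v_i$'' says the opposite.
\item The induced order on $V\setminus\{\ell\}$ agrees with the edge-restricted orientation because the unique path in $T$ between two vertices of $T'$ avoids the leaf $\ell$. The fact that paths in $T'$ are paths in $T$ only gives one direction.
\end{itemize}
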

\begin{proof}
    We proceed by induction on $n=|V(T)|$. The cases $n=1$ and $n=2$ are immediate. Assume $n \geq 3$. Fix a leaf $\ell$ of $T$ that has a unique neighbour $p$ and set $T'=T\setminus\ell$, which is a tree with $n-1$ vertices.

    Every orientation $P \in \operatorname{Or}(T)$ is determined by its restriction $P' \in \operatorname{Or}(T')$, together with the direction of the edge $\ell p$. We say that the leaf is in the state $L$ if $\ell <_Pp$, and in state $H$ if $p<_P\ell$. We write $P=(P',L)$ or $P=(P',H)$.

    Interactions of clicks with this decomposition:
    \begin{itemize}
        \item $\ell$ is in state $L$ if and only if $\ell$ is a source of $P$. In this case, clicking $\ell$ changes the state from $L$ to $H$ without changing the restriction to $T'$.
        \item A vertex $v\in V(T') \setminus \{p\}$ is a source of $P$ if and only if it is a source of $P'$. (Because its edges coincide in $T$ and $T'$.) In this case, clicking $v$ induces on $T'$ the same operation as clicking $v$ in $P'$, and it leaves the leaf state unchanged.
        \item The vertex $p$ is a source of $P$ if and only if it is a source of $P'$ and the leaf is in the state $H$. In this case, clicking $p$ induces the click at $p$ on $T'$ and changes the leaf state from $H$ to $L$.
    \end{itemize}

By the induction hypothesis, any two orientations of $T'$ can be connected by a sequence of source-clicks. Let $\pi$ be such a sequence starting from $P'$. We lift $\pi$ to $T$ by performing its clicks in the same order and inserting, when necessary, clicks at the leaf $\ell$, which leave the restriction to $T'$ unchanged. We read $\pi$ from left to right. 

A click at vertex $v \neq p$ is lifted to the same click at $v$ in $T$. 

A click at $p$ is lifted as follows: if the current leaf state is $H$, click $p$; if the current leaf state is $L$, first click $\ell$, changing the state to $H$, and then click $p$. In this way the restriction to $T'$ follows exactly the sequence $\pi$.

Moreover, after every lifted click at $p$, the leaf state is $L$. Clicks at vertices different from $p$ and $\ell$ do not change the leaf state. Thus, after lifting $\pi$, the final restriction to $T'$ is the desired one, and the final state is determined explicitly.

Now let $Q=(Q',\eta) \in \operatorname{Or}(T)$ be arbitrary, with state $\eta \in \{L,H\}$. By induction, choose a source-click sequence in $T'$ from $P'$ to $Q'$, and lift it to $T$ as above. This gives an orientation of the form $(Q', \epsilon)$. If $\epsilon = \eta$, we are done. If $\epsilon=L$ and $\eta=H$, click the leaf $\ell$, which changes only the leaf state.

It remains only to see that the opposite change, from $H$ to $L$ at fixed $Q'$, is also possible. Choose an orientation $R' \in \operatorname{Or}(T')$ in which $p$ is a source, for example by orienting every edge away from $p$. By the induction hypothesis, there is a source-click sequence from $Q'$ to $R'$, and another one from $cl_p(R')$ back to $Q'$. Concatenating these sequences with the click at $p$ gives a closed source-click walk based at $Q'$, that is, a sequence of source-clicks on $T'$ that starts and ends at $Q'$. This walk contains a click at $p$. Lifting this closed walk from the state $(Q',H)$ returns the restriction to $Q'$, and after the last lifted click at $p$, the leaf state is $L$. Hence $(Q',H) \to^*(Q',L)$. 

Therefore both leaf states are reachable over the same restriction $Q'$, and so $P \to ^*Q$. This completes the induction.
\end{proof}
\begin{theorem} 
\label{tree-exhaustiveness}
    Let $T$ be a finite tree. Starting from any orientation $P \in \operatorname{Or}(T)$, every orientation of $T$ can be reached by a finite sequence of applications of the general construction of Theorem \ref{generalizzazione teomio} to single-source decompositions. In particular, all orientations of $T$ are universally derived equivalent.
\end{theorem}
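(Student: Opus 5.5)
The plan is to combine Proposition \ref{Transitivity of clicks on a tree} with Corollary \ref{Source step}, and then close with transitivity of universal derived equivalence.

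First, fix $P,Q \in \operatorname{Or}(T)$. By Proposition \ref{Transitivity of clicks on a tree} there is a finite chain
\[P=P_0 \to P_1 \to \dots \to P_r=Q,\]
where each step has the form $P_{i+1}=\operatorname{cl}_{s_i}(P_i)$ for some source $s_i$ of $P_i$.

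Second, I treat each step separately. If $|V|=1$, then $\operatorname{Or}(T)$ has a single element and there is nothing to prove, so assume $|V|\ge 2$. By Corollary \ref{Source step}, the single-source decomposition $P_i=\{s_i\}\sqcup (V\setminus\{s_i\})$ is an admissible cut, with $M(s_i)=N(s_i)$. Theorem \ref{generalizzazione teomio} then realizes $P_i$ as the positive poset $(A\sqcup B)^+_{\mathcal{M}}$, and the associated negative poset is exactly $\operatorname{cl}_{s_i}(P_i)=P_{i+1}$. So each arrow is one application of the general construction to a single-source decomposition. This gives the first assertion.

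Third, for the universal derived equivalence, Corollary \ref{The associated universally derived equivalent poset} gives $D(\mathcal{A}^{P_i})\simeq D(\mathcal{A}^{P_{i+1}})$ as triangulated categories for every abelian $\mathcal{A}$. Composing these $r$ triangulated equivalences yields $D(\mathcal{A}^{P})\simeq D(\mathcal{A}^{Q})$ for every $\mathcal{A}$.

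No step here is a real obstacle, since all the substance is already in Proposition \ref{Transitivity of clicks on a tree} and Corollary \ref{Source step}. The only points needing care are minor: making sure each intermediate $P_i$ is again an orientation of $T$ (true because a click only reverses the edges at $s_i$), and observing that composites of triangulated equivalences are triangulated equivalences.
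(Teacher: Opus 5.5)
Your proposal is correct and follows essentially the same route as the paper: Proposition~\ref{Transitivity of clicks on a tree} supplies the chain of source-clicks, Corollary~\ref{Source step} realizes each click as the negative poset of a single-source admissible cut, and composing the resulting equivalences (transitivity of universal derived equivalence) finishes the argument. Your separate treatment of the case $|V|=1$, which the hypothesis of Corollary~\ref{Source step} excludes, is a small point of care that the paper leaves implicit.
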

\begin{proof}
    Let $P,Q \in \operatorname{Or}(T).$ By Proposition \ref{Transitivity of clicks on a tree}, there exists a finite sequence of source-clicks from $P$ to $Q$. By Corollary \ref{Source step}, each source-click is realized by the construction of Theorem \ref{generalizzazione teomio} applied to a decomposition of the form $A=\{s\}$, where $s$ is a source of the current orientation. Hence consecutive orientations in the sequence are universally derived equivalent. Since universal derived equivalence is transitive, $P$ and $Q$ are universally derived equivalent. Therefore all orientations of $T$ are universally derived equivalent.
\end{proof}

\begin{remark}
    If $s$ is a source with at least two neighbours, then $M(s)=N(s)$ has more than one element. Thus the source step of Corollary \ref{Source step} cannot, in general, be obtained from the singleton case of Corollary \ref{Reconstructing a poset from a cut}. Already in the path $A_3$, the middle vertex can be the unique source, and clicking it requires the general construction.
\end{remark}

\begin{remark}
   The transitivity argument above uses that the underlying graph is a tree. For a graph with cycles, the analogous statement for acyclic orientations and graph-theoretic source-clicks can fail.

   Indeed, suppose that a graph contains a cycle $C$, and choose one of the two possible directions around $C$. For an acyclic orientation $O$, define $\sigma_C(O)$ as the number of edges of $C$ oriented in the chosen direction minus the number of edges oriented in the opposite direction. 

   A graph-theoretic source-click does not change $\sigma_C$. If the clicked vertex does not lie on $C$, this is clear. If it lies on $C$, then the two edges of $C$ incident with it are both reversed. Since the clicked vertex is a source, before the click one of these two edges is oriented in the chosen direction and the other one is oriented in the opposite direction. After the click, the two contributions are exchanged. Hence $\sigma_C$ is invariant under graph-theoretic source-clicks.

   On the other hand, acyclic orientations with different values of $\sigma_C$ exist. Indeed, write $C=(v_1, \dots, v_r,v_1)$ in the chosen direction and orient every edge according to a total order extending $v_1<\dots<v_r$. This gives $\sigma_C=r-2$, while reversing the total order gives $\sigma_C=2-r$.

   Therefore, for graphs with cycles, such clicks cannot in general connect all acyclic orientations. Notice that, unlike the tree case, a graph-theoretic source-click need not coincide with the poset source-click $\operatorname{cl_s}$ of Section \ref{An intrinsic criterion for source-clicks}: an edge of an acyclic orientation may become redundant after taking the transitive closure and hence not correspond to a covering relation. Thus $\sigma_C$ is an obstruction to transitivity for graph orientations, rather than in general an invariant of the poset source-click operation.

   In particular, the presence of a cycle does not by itself prevent an individual poset source-click from being realized by an admissible cut; this is governed by the criterion of Proposition \ref{General source-click criterion}.
\end{remark}

%% file: TDA.tex
\section{Application to persistence modules and parameter extension} \label{Application to persistence modules and parameter extension}
Persistence modules provide a natural connection between the diagram categories considered above and topological data analysis. Let $k$ be a field. We denote by $Vect_k$ the category of $k$-vector spaces, and by $vec_k$ its full subcategory of finite-dimensional $k$-vector spaces. A persistence module indexed by a poset $P$ is a functor $P \longrightarrow Vect_k$, and it is called pointwise finite-dimensional when all its values are finite-dimensional \cite[Section 2.3]{Botnan}. Thus, for a finite poset $P$, the diagram category $vec_k^P$ is the category of pointwise finite-dimensional persistence modules indexed by $P$.

Since $vec_k$ is an abelian category, if two finite posets $P$  and $Q$ are universally derived equivalent, then, for every field $k$, \[D(vec_k^P) \simeq D(vec_k^Q). \] Hence every universal derived equivalence between finite posets obtained above yields, in particular, a derived equivalence between the corresponding categories of pointwise finite-dimensional persistence modules.

For $n \geq 1$, let $I_n$ denote the $n$-element chain. 
When $P$ is a product of finite chains, this recovers the usual discrete multiparameter setting: a product $I_{n_1} \times \dots \times I_{n_d}$ is a finite $d$-dimensional parameter grid \cite[Sections 2.2-2.3]{Botnan}. More generally, arbitrary finite posets allow parameter spaces with branching or nonstandard orientations.


The following result is a special case of Ladkani \cite[Lemma 2.9]{ladkani}. We include a short argument for completeness.

\begin{proposition}
\label{Stability under parameter extension}
    Let $X$ and $Y$ be finite universally derived equivalent posets, and let $R$ be any finite poset. Then
    \[X \times R \,\,\, \text{ and } \,\,\, Y \times R\] are universally derived equivalent.
\end{proposition}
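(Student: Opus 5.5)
The argument rests on the exponential law for functor categories. For finite posets $X$ and $R$, viewed as categories, and any category $\mathcal{A}$, currying gives an isomorphism of categories
\[
\mathcal{A}^{X \times R} = \operatorname{Fun}(X \times R, \mathcal{A}) \cong \operatorname{Fun}(X, \operatorname{Fun}(R, \mathcal{A})) = (\mathcal{A}^R)^X .
\]
A functor $F$ is sent to $x \mapsto F(x,-)$. Morphisms correspond because a relation $(x,r) \le (x',r')$ in the product order factors as $(x,r) \le (x',r) \le (x',r')$. Naturality in both variables is exactly naturality of the curried family.

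\textbf{Step 1.} If $\mathcal{A}$ is abelian, then so is $\mathcal{B} := \mathcal{A}^R$, since kernels and cokernels are computed pointwise. The currying isomorphism is additive and exact, because exactness on both sides is tested pointwise on $X \times R$. It therefore induces an equivalence of triangulated categories
\[
D(\mathcal{A}^{X\times R}) \simeq D(\mathcal{B}^X),
\]
and similarly $D(\mathcal{A}^{Y\times R}) \simeq D(\mathcal{B}^Y)$.

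\textbf{Step 2.} Apply the hypothesis that $X$ and $Y$ are universally derived equivalent to the abelian category $\mathcal{B}$. This is exactly where universality is used: the equivalence is needed for the coefficient category $\mathcal{A}^R$, not for $\mathcal{A}$ itself. It gives
\[
D(\mathcal{B}^X) \simeq D(\mathcal{B}^Y).
\]

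\textbf{Step 3.} Chain the three equivalences:
\[
D(\mathcal{A}^{X\times R}) \simeq D((\mathcal{A}^R)^X) \simeq D((\mathcal{A}^R)^Y) \simeq D(\mathcal{A}^{Y\times R}).
\]
Since $\mathcal{A}$ was an arbitrary abelian category, $X \times R$ and $Y \times R$ are universally derived equivalent.

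The only point needing care is Step 1. One must confirm that the currying isomorphism is an isomorphism of abelian categories, not just of categories, so that it passes to derived categories. This follows because isomorphisms of additive categories are automatically exact. Alternatively, one can check directly that short exact sequences correspond, since both sides are pointwise conditions on $X \times R$. Apart from this, the proof is a formal consequence of the definition of universal derived equivalence.
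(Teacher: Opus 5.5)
Your proposal is correct and follows the paper's proof essentially verbatim. Both use the currying identification $\mathcal{A}^{X\times R}\cong(\mathcal{A}^R)^X$, apply the universal hypothesis to the abelian category $\mathcal{A}^R$, and chain the resulting equivalences; your remark that an isomorphism of abelian categories is automatically additive and exact, and so passes to derived categories, makes explicit a point the paper leaves implicit.
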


\begin{proof}
    Let $\mathcal{A}$ be an arbitrary abelian category. Since $R$ is finite, the diagram category $\mathcal{A}^R$ is again abelian. Moreover, there are natural isomorphisms of categories 
    \[\mathcal{A}^{X \times R} \cong (\mathcal{A}^R)^X \,\,\, \text{ and }\,\,\, \mathcal{A}^{Y \times R} \cong (\mathcal{A}^R)^Y.\] Since $X$ and $Y$ are universally derived equivalent, we may apply the defining equivalence to the abelian category $\mathcal{A}^R$. Hence \[D((\mathcal{A}^R)^X) \simeq D((\mathcal{A}^R)^Y).\] Using the natural identification above, we obtain \[D(\mathcal{A}^{X \times R}) \simeq D(\mathcal{A}^{Y \times R}).\] Since $\mathcal{A}$ was arbitrary, $X \times R$ and $Y \times R$ are universally derived equivalent. 
\end{proof}
\begin{corollary} 
\label{Parameter extension for admissible cuts}
    Let $P=A \sqcup B$ be an admissible cut and let $P^-$ be its associated negative poset. Then, for every finite poset $R$, 
    \[P \times R \,\,\, \text{ and } \,\,\, P^- \times R\] are universally derived equivalent.
\end{corollary}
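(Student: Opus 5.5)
The proof is a two-step composition of results already established. First, since $P=A \sqcup B$ is an admissible cut, Corollary \ref{The associated universally derived equivalent poset} shows that $P$ and its associated negative poset $P^-$ are universally derived equivalent. This uses Theorem \ref{generalizzazione teomio} to identify $P$ with the positive poset $P^+=(A\sqcup B)^+_{\mathcal{M}}$, and then Ladkani's Theorem \ref{ladkani}. Both $P$ and $P^-$ are finite, since they share the finite underlying set $A \sqcup B$, and $P^-$ is a genuine poset by Lemma \ref{lemma part ord}.

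Second, we apply Proposition \ref{Stability under parameter extension} with $X=P$, $Y=P^-$ and the given finite poset $R$. This yields that $P\times R$ and $P^-\times R$ are universally derived equivalent. Concretely, for every abelian category $\mathcal{A}$ we apply the equivalence $D((\mathcal{A}^R)^P)\simeq D((\mathcal{A}^R)^{P^-})$ to the abelian category $\mathcal{A}^R$, and combine it with the identifications $\mathcal{A}^{P\times R}\cong(\mathcal{A}^R)^P$ and $\mathcal{A}^{P^-\times R}\cong(\mathcal{A}^R)^{P^-}$.

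There is no real obstacle here. The only points to check are that the hypotheses of Proposition \ref{Stability under parameter extension} are met, namely finiteness of $P$, $P^-$ and $R$, and that the equivalence from Corollary \ref{The associated universally derived equivalent poset} is universal, that is, valid for every abelian category. Universality is exactly what allows it to be applied to the coefficient category $\mathcal{A}^R$ rather than only to $\mathcal{A}$.
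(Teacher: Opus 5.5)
Your proposal is correct and matches the paper's proof: you obtain the universal derived equivalence of $P$ and $P^-$ from Corollary \ref{The associated universally derived equivalent poset}, then apply Proposition \ref{Stability under parameter extension} with $X=P$, $Y=P^-$. Your remark that universality is what allows replacing $\mathcal{A}$ by $\mathcal{A}^R$ is exactly how the paper proves that proposition.
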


\begin{proof}
    By Corollary \ref{The associated universally derived equivalent poset}, $P$ and $P^-$ are universally derived equivalent. The conclusion then follows directly from Proposition \ref{Stability under parameter extension}.
\end{proof}

In the persistence setting, Corollary \ref{Parameter extension for admissible cuts} allows one to adjoin further discrete parameters to any universally derived equivalent pair produced by an admissible cut. In particular, taking
\[R=I_{n_1} \times \dots \times I_{n_d}\] yields derived equivalences between categories of persistence modules obtained by adjoining $d$ ordinary discrete persistence parameters.

\begin{example} \label{example Tamari TDA}
    Consider the admissible cut of the Tamari lattice $T_3$ from Example \ref{T3/D5}, and let $T_3^-$ be its associated negative poset. By Corollary \ref{Parameter extension for admissible cuts}, for every finite poset $R$,
    \[T_3 \times R \,\,\, \text{ and } \,\,\, T_3^- \times R\] are universally derived equivalent.

    In particular, taking $R=I_n$ yields, for every $n \geq 1$, a pair of universally derived equivalent posets with $5n$ elements. Thus the equivalence of Example \ref{T3/D5} gives rise to an infinite family of universally derived equivalent pairs. 

    More generally, if
    \[R=I_{n_1} \times \dots \times I_{n_d},\] then, for every field $k$,
    \[D(vec_k^{T_3 \times R}) \simeq D(vec_k^{T_3^- \times R}).\] Thus, the equivalence of Example \ref{T3/D5} extends to parameter spaces obtained by adjoining any finite number of ordinary discrete parameters.

    When $R$ is a product of nonempty finite chains, these two parameter posets are still non-isomorphic. Indeed, $T_3 \times R$ has a unique maximal element, whereas $T_3^- \times R$ has two maximal elements. Thus the construction yields infinite families of non-isomorphic parameter posets with equivalent derived categories of pointwise finite-dimensional persistence modules.
\end{example}

\begin{corollary}
\label{Parameter extension for tree orientation}
    Let $T$ be a finite tree, let $P,Q \in \operatorname{Or}(T)$, and let $R$ be any finite poset. Then
    \[P \times R \,\,\, \text{ and } Q \times R\] are universally derived equivalent.
\end{corollary}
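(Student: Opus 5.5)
The argument simply composes two results already established in the paper. The plan is to reduce the statement to Theorem \ref{tree-exhaustiveness} and Proposition \ref{Stability under parameter extension}, applied in that order.

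First, take $X=P$ and $Y=Q$, two orientations of the finite tree $T$. Theorem \ref{tree-exhaustiveness} gives that $P$ and $Q$ are universally derived equivalent. Recall why: by Proposition \ref{Transitivity of clicks on a tree} there is a finite chain of source-clicks
\[P=P_0\to P_1\to\dots\to P_r=Q,\]
and by Corollary \ref{Source step} each click is realized by an admissible cut with $A=\{s\}$. Hence each consecutive pair is universally derived equivalent by Corollary \ref{The associated universally derived equivalent poset}, and transitivity of equivalences of triangulated categories finishes this step.

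Second, apply Proposition \ref{Stability under parameter extension} to $X=P$, $Y=Q$ and the given finite poset $R$. This yields that $P\times R$ and $Q\times R$ are universally derived equivalent.

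An equivalent route is to work click by click: apply Corollary \ref{Parameter extension for admissible cuts} to each admissible cut $P_{i}=\{s_i\}\sqcup (V\setminus\{s_i\})$. This gives $P_i\times R\simeq P_{i+1}\times R$ in the universal derived sense, and we then chain these equivalences. Neither route contains a genuine obstacle. The only point needing care is that universal derived equivalence is transitive: for each fixed abelian category $\mathcal{A}$ we compose the triangulated equivalences $D(\mathcal{A}^{P_i\times R})\simeq D(\mathcal{A}^{P_{i+1}\times R})$, which is immediate.

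For persistence modules, specializing $\mathcal{A}=vec_k$ gives $D(vec_k^{P\times R})\simeq D(vec_k^{Q\times R})$ for every field $k$.
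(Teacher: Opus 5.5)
Your proposal is correct and follows the paper's proof: Theorem \ref{tree-exhaustiveness} gives that $P$ and $Q$ are universally derived equivalent, and Proposition \ref{Stability under parameter extension} then transfers this to $P\times R$ and $Q\times R$. Your click-by-click alternative via Corollary \ref{Parameter extension for admissible cuts} is also valid, but it amounts to the same argument with the product taken before chaining rather than after.
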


\begin{proof}
    By Theorem \ref{tree-exhaustiveness}, $P$ and $Q$ are universally derived equivalent. The conclusion follows from Proposition \ref{Stability under parameter extension}.
\end{proof}

In particular, if
\[R=I_{n_1} \times \dots \times I_{n_d},\] then the posets $P \times R$, as $P$ ranges over $\operatorname{Or(T)}$, yield derived equivalent categories of pointwise finite-dimensional persistence modules. Thus, after adjoining any finite number of ordinary discrete parameters, the orientation of the tree coordinate is not detected by the derived category.

The following example illustrates Corollary \ref{Parameter extension for tree orientation} in the simplest nontrivial two-parameter case.
\begin{example}
    Let $I_3$ and $Z_3$ be the following two orientations of the three-vertex tree, where $I_3$ is the three-element chain, and let $I_2$ denote the two-element chain. Set
    \[G \coloneq I_3 \times I_2, \,\,\,\,\, H \coloneq Z_3 \times I_2.\]
   
\[\begin{tikzcd}[sep=small]
	&& {I_3} &&&&&&&& {Z_3} && \\
	&& {x_1} &&&&&&&& {z_1} \\
	{x_0} &&&& {x_2} &&&& {z_0} &&&& {z_2}
	\arrow[from=2-3, to=3-5]
	\arrow[from=3-1, to=2-3]
	\arrow[from=3-9, to=2-11]
	\arrow[from=3-13, to=2-11]
\end{tikzcd}\]
\\

\[\begin{tikzcd}[column sep=small,row sep=scriptsize]
	&& {G=I_3 \times I_2} &&&&&&&& {H=Z_3 \times I_2} && \\
	{x_0} && {x_1} && {x_2} &&&& {z_0} && {z_1} && {z_2} \\
	\\
	{x_0'} && {x_1'} && {x_2'} &&&& {z_0'} && {z_1'} && {z_2'}
	\arrow[from=2-1, to=2-3]
	\arrow[from=2-1, to=4-1]
	\arrow[from=2-3, to=2-5]
	\arrow[from=2-3, to=4-3]
	\arrow[from=2-5, to=4-5]
	\arrow[from=2-9, to=2-11]
	\arrow[from=2-9, to=4-9]
	\arrow[from=2-11, to=4-11]
	\arrow[from=2-13, to=2-11]
	\arrow[from=2-13, to=4-13]
	\arrow[from=4-1, to=4-3]
	\arrow[from=4-3, to=4-5]
	\arrow[from=4-9, to=4-11]
	\arrow[from=4-13, to=4-11]
\end{tikzcd}\]
 By Theorem \ref{tree-exhaustiveness}, the posets $I_3$ and $Z_3$ are universally derived equivalent, since they are two orientations of the same finite tree. Therefore, by Proposition \ref{Stability under parameter extension}, the product posets $G$ and $H$ are universally derived equivalent.

    The poset $G$ is the usual $3 \times 2$ rectangular grid, which is a standard finite parameter space for discrete two-parameter persistence \cite[Sections 2.2-2.3]{Botnan}. By contrast, $H$ combines one ordinary discrete parameter with one zigzag-shaped parameter. Thus $G$ and $H$ provide an example of two non-isomorphic parameter posets whose categories of pointwise finite-dimensional persistence modules have equivalent derived categories. 

    To see that $G$ and $H$ are not isomorphic, it is enough to compare their minimal elements: $G$ has a unique minimal element, whereas $H$ has two minimal elements. This shows that universal derived equivalence may relate a standard multiparameter grid to a parameter poset with a nonstandard orientation.

\end{example}

\section*{Acknowledgments}
I would like to thank Jorge Vitória for his invaluable guidance, many helpful discussions, and numerous suggestions since the early stages of this work. I am also grateful to Eero Hyry for reading an earlier version of the manuscript and for useful comments.